\numberwithin{equation}{section}
\title[Pinched theorem]{Pinched theorem and the reverse Yau's inequalities for compact K\"{a}hler-Einstein manifolds}
\author[Rong Du]{Rong Du$^{\dag}$}
\address{Department of Mathematics\\
Key Laboratory of MEA (Ministry of Education) and Shanghai Key Laboratory of PMMP\\
East China Normal University\\
Rm. 312, Math. Bldg, No. 500, Dongchuan Road\\
Shanghai, 200241, P. R. China} \email{rdu@math.ecnu.edu.cn}
\thanks{$^{\dag}$ The research is supported by National Natural Science Foundation of China (Grant No. 12471040), Innovation Action Plan (Basic research projects) of Science and Technology Commission of Shanghai Municipality (Grant No. 21JC1401900), Natural Science Foundation of Chongqing Municipality, China (general program, Grant No. CSTB2023NSCQ-MSX0334) and Science and Technology Commission of Shanghai Municipality (Grant No. 22DZ2229014).}
\theoremstyle{definition}
\newtheorem{theorem}[subsection]{Theorem}
\newtheorem{lemma}[subsection]{Lemma}
\newtheorem{definition}[subsection]{Definition}
\newtheorem{example}{Example}[section]
\newtheorem{corollary}[subsection]{Corollary}
\newtheorem{remark}[subsection]{Remark}
\let\tilde=\widetilde
\begin{document}

\begin{abstract}
For a compact K\"{a}hler-Einstein manifold $M$ of dimension $n\ge 2$, we explicitly write the expression $-c_1^n(M)+\frac{2(n+1)}{n}c_2(M)c_1^{n-2}(M)$ in the form of the integral on a function invloving the holomorphic sectional curvature alone by using the invariant theory. As applications, we get a reverse Yau's inequality and improve the classical $\frac{1}{4}$-pinched theorem and negative $\frac{1}{4}$-pinched theorem for compact K\"{a}hler-Einstein manifolds to smaller pinching constant depending only on the dimension and the first Chern class of $M$. If $M$ is not with positive or negative holomorphic sectional curvature, we characterise the $n$-dimensional complex torus by certain numerical condition. Moreover, we confirm Yau's conjecture for positive holomorphic sectional curvature and Siu-Yang's conjecture for negative holomorphic sectional curvature even for higher dimensions if the absolute value of the holomorphic sectional curvature is small enough. Finally, using the reverse Yau's inequality, we can construct a new example of projective manifold of dimension $n$ $(n\ge 2)$ which is with ample canonical bundle, but does not carry any hermitian metric with negative holomorphic sectional curvature.
\end{abstract}
\maketitle

\vspace{1cm}
\section{\textbf{Introduction}}
Let $(M, J, g)$ be a compact complex manifold of dimension $n\ge 2$ with complex structure $J$ and
K\"{a}hler metric $g$. One can define the sectional curvature
\[K(p)=\frac{R(X,Y,X,Y)}{||X||^2||Y||^2-g(X,Y)^2}\] for real $2$-plane $p$ spanned by $X$ and $Y$ in the tangent space at some point $x\in M$, where $R$ is the Riemannian curvature tensor.

Similarly, if the real $2$-plane $p$ spanned by $X$ and $JX$ is $J$-invariant, then one can also define the holomorphic
sectional curvature
\[H(p)=\frac{R(X,JX,X,JX)}{||X||^4}.\]
In terms of complex coordinates, it is equivalent to write
\[H(p)=R(\xi,\bar{\xi},\xi,\bar{\xi}),\] where $\xi$ is of unit length in the tangent space $T_x(M)$ at $x\in M$.  We denote $$R_{\xi\bar{\xi}\xi\bar{\xi}}:=R(\xi,\bar{\xi},\xi,\bar{\xi})$$ in the context.

Let $\delta$ be a positive number such that $0<\delta\le 1$ and $p$ be a real plane in the tangent space $T_x(M)$ at some point $x\in M$. An $n$-dimensional Riemannian manifold $M$ is said to be $\delta$-pinched at $x$ if there exist a positive constant $K_m$ such that its sectional curvature $K(p)$ satisfies
\begin{equation}\label{p}
\delta K_m\le K(p)\le K_m
\end{equation} for all planes $p\subseteq T_x(M)$. If (\ref{p}) holds for all $x\in M$ and for all planes $p\subseteq T_x(M)$, then $M$ is called $\delta$-pinched. The $\delta$ is called the pinching constant at $x$ and pinching constant respectively.
We say that $M$ is holomorphically $\delta$-pinched at $x$ if there exist a positive constant $H_m$ such that  its holomorphic sectional curvature $H(p)$ satisfies
\begin{equation}\label{hp}
\delta H_m\le H(p)\le H_m
\end{equation}  for all planes $p\subseteq T_x(M)$. If (\ref{hp}) holds for all $x\in M$ and for all planes $p\subseteq T_x(M)$, then $M$ is called holomorphically $\delta$-pinched. The $\delta$ is called the holomorphic pinching constant at $x$ and holomorphic pinching constant respectively.

Similarly, an $n$-dimensional Riemannian manifold $M$ is said to be negative $\delta$-pinched (at $x$) (resp. holomorphically $\delta$-pinched (at $x$)) if $-K(p)$ (resp. $-H(p)$) satisfies (\ref{p}) (resp. (\ref{hp})).

It is known that there is a relation between the concepts of ``$\delta$-pinched" and ``holomorphically $\delta$-pinched" (see \cite{Ko-No} page 369). If $M$ is (negative) holomorphically $\delta$-pinched, then it is (negative) $\frac{1}{4}(3\delta-2)$-pinched. If $M$ is (negative) $\delta$-pinched, then it is (negative) holomorphically $\frac{\delta(8\delta+1)}{1-\delta}$-pinched (see \cite{Ber1} Proposition 1 and \cite{Bi-Go} Section 8). It is obviously that if $\delta<1$ then $\frac{1}{4}(3\delta-2)<\frac{1}{4}$.
We also recommend \cite{Dr-Se} for the equivalent computation.

Now suppose that the K\"{a}hler manifold $M$ is compact. Then $M$ is holomorphically isometric with complex projective space $\mathbb{P}^n$ with a canonical metric if $M$ is $\frac{1}{4}$-pinched or holomorphically $1$-pinched (see \cite{Ko-No} page 369). A K\"{a}hler manifold $M$ with positive holomorphic sectional curvature is less understood. Yau (\cite{Ya3}) asked if the positivity of holomorphic sectional curvature can be used to characterize the rationality of algebraic manifolds. In 1975, Hitchin (\cite{Hit}) proved that any compact K\"{a}hler surface with positive holomorphic sectional curvature must be a rational surface. In higher dimensions, Heier-Wong (\cite{He-Wo}) show that any projective manifold which admits a K\"{a}hler metric with positive holomorphic sectional curvature must be rationally connected. Recently, Yang-Zheng (\cite{Yan-Zh}) confirm Yau's conjecture by adding the condition that the pinching constant is great than or equal to $\frac{1}{2}$.

On the other hand, if a compact K\"{a}hler manifold $M$ is negative $\frac{1}{4}$-pinched or negative holomorphically $1$-pinched, then $M$ is holomorphically isometric to a compact smooth ball quotient (see \cite{Ya-Zh}).
If a K\"{a}hler manifold $M$ is with negative holomorphic sectional curvature, one may wonder whether there is a corresponding curvature characterization for compact
quotients of the complex ball which is the noncompact counterpart of the complex
projective space. However, Mostow-Siu (\cite{Mo-Si}) constructed a compact K\"{a}hler surface which has negative sectional curvature and yet whose universal covering is not biholomorphic to the complex $2$-ball. Therefore, one has to impose additional conditions such as K\"{a}hler-Einstein condition. In \cite{Si-Ya}, the authors conjecture that every K\"{a}hler-Einstein compact complex manifold of complex dimension two with negative sectional curvature is biholomorphic to a compact quotient of the complex $2$-ball. Moreover, they confirm this conjecture by imposing a local pinching condition.

Although the $\frac{1}{4}$-pinched theorem and negative $\frac{1}{4}$-pinched theorem are both classical results, it is a little bit surprising that the pinching constant is independent of the dimension of $M$. If $M$ is with K\"{a}hler-Einstein metric, one could expect that the pinching constant should be improved if the dimension of $M$ is involved. In \cite{Ber2}, Berger shows that if
\[\delta>\frac{(n-2)\Delta}{n-1}>0,\]
then the sectional curvature is everywhere constant, where $\delta$ is the minimum and $\Delta$ is the maximum sectional curvature at points of $M$.

Given a compact K\"{a}hler-Einstein manifold $(M^n, g)$ of dimension $n$ with Ricci curvature $\mu\neq0$, the well-known
computation on the first two Chern numbers is due to Lascoux and Berger (see for example Pages 225-226 in Zheng's famous book \cite{zhe}). Our method is using the invariant theory to write
\[\frac{2(n+1)}{n}c_1^{n-2}c_2-c_1^n\] in the form of the integral of the variance of the holomorphic sectional curvatures at each point in $M$.
As a result, we improve the classical $\frac{1}{4}$-pinched theorem and negative $\frac{1}{4}$-pinched theorem for compact K\"{a}hler-Einstein manifold to some smaller pinching constant depending only on the dimension and the first Chern class of $M$.

Suppose that $V=\text{vol}(\mathbb{S}^{2n-1})$ is the volume of the unit sphere $\mathbb{S}^{2n-1}$ of dimension $2n-1$ and \[\text{Ave}(R):=\frac{1}{V}\int_{\xi\in
\mathbb{S}^{2n-1}\subseteq T_x(M)}
R_{\xi\bar{\xi}\xi\bar{\xi}}~d\mathbb{S}^{2n-1}\] is the average value of the holomorphic sectional curvature at $x$, which by Berger's lemma is proportional to the scalar curvature, namely, $\text{Ave}(R)=\frac{2\mu}{n+1}$. We also can get the result from (\ref{AveR}) and (\ref{Berger}).

Now, let $f$ be the function on $M$ given by $$f(x)=\frac{1}{V}\int_{\xi\in
\mathbb{S}^{2n-1}\subseteq T_x(M)} (R_{\xi\bar{\xi}\xi\bar{\xi}}-Ave(R))^2 d\mathbb{S}^{2n-1}.$$
\begin{theorem}\label{formula}
Let $(M^n, g)$ be a compact K\"{a}hler-Einstein manifold of dimension $n$ with Ricci curvature $\mu\neq 0$. Then
\begin{equation}\label{conj}
\frac{2(n+1)}{n}c_1^{n-2}c_2-c_1^{n}=D_n\int_M f d\tilde{v},
\end{equation}
where $$D_n=\frac{(n+1)\cdot(n+3)!\mu^{n-2}}{4(\pi)^{n}n(n-1)}$$ and $$d\tilde{v}=\left(\frac{\textbf{i}}{2}\right)^ndz_{1}\wedge d\bar{z}_{1}\wedge\cdots\wedge dz_{n}\wedge d\bar{z}_{n}.$$
\end{theorem}

As a corollary, we can get the following several theorems.

First, we have reverse Yau's inequality.
\begin{theorem}\label{rY}
Let $(M^n, g)$ be a compact K\"{a}hler-Einstein manifold of dimension $n$ with Ricci curvature $\mu\neq 0$.  If the holomorphic sectional curvature is nonpositive or nonnegative, then
\begin{equation}
2(n+1)c_2(M)|c_1^{n-2}(M)| < \left(\frac{(n+1)^2(n+2)(n+3)H_m^2}{16(n-1)}+n\right)|c_1^n(M)|.
\end{equation}
In particular,
\begin{equation}
2(n+1)c_2(M)|c_1^{n-2}(M)| < \left(\frac{(n+1)^2(n+2)(n+3)}{16(n-1)}+n\right)|c_1^n(M)|.
\end{equation}
\end{theorem}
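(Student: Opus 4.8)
The strategy is to apply Theorem~\ref{formula} at the point $x$ from the hypothesis and to bound the sphere integral on the right-hand side of \eqref{conj} from above, exploiting that the holomorphic sectional curvature at $x$ does not change sign. Since $M$ is K\"ahler-Einstein, the Ricci form equals $\lambda\,\omega$ for the K\"ahler form $\omega$ and a constant $\lambda$; then $\mathrm{Ave}(R)$ is a positive constant multiple of the scalar curvature, hence a constant $a$ with $\operatorname{sgn}(a)=\operatorname{sgn}(\lambda)$, and being independent of $\xi$ it comes out of the integral, so the right side of \eqref{conj} equals $D\,a^{\,n-2}\!\int_{\mathbb{S}^{2n-1}}(R_{\xi\bar\xi\xi\bar\xi}-a)^2\,d\mathbb{S}^{2n-1}$. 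Moreover $c_1(M)=\tfrac{\lambda}{2\pi}[\omega]$, so $[\omega]$ --- hence the normalized volume $V(M,x)$ --- is an explicit multiple of $|c_1^n(M)|$; one computes $V(M,x)=\tfrac{\pi^n}{n!}\,|c_1^n(M)|$, and therefore $D\cdot V=\tfrac{(n+1)^n(n+2)(n+3)}{2^n n(n-1)}\,|c_1^n(M)|$. Finally, if $\lambda=0$ then $a=0$ and \eqref{conj} (invoking the sign hypothesis when $n=2$) forces both sides of the asserted inequality to vanish, so we may assume $\lambda\neq0$.

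The heart of the argument is an estimate for the variance of the holomorphic sectional curvature at $x$. Suppose $R_{\xi\bar\xi\xi\bar\xi}\ge0$ for every unit $\xi\in T_x(M)$ (the nonpositive case is identical after replacing $R_{\xi\bar\xi\xi\bar\xi}$ and $a$ by their absolute values), and write $I:=\int_{\mathbb{S}^{2n-1}}(R_{\xi\bar\xi\xi\bar\xi}-a)^2\,d\mathbb{S}^{2n-1}\ge0$. Then $0\le a\le H_m$ because $0\le R_{\xi\bar\xi\xi\bar\xi}\le H_m$, and $R_{\xi\bar\xi\xi\bar\xi}^{\,2}\le H_m\,R_{\xi\bar\xi\xi\bar\xi}$ gives
\[
I=\int_{\mathbb{S}^{2n-1}}R_{\xi\bar\xi\xi\bar\xi}^{\,2}\,d\mathbb{S}^{2n-1}-a^2V\le H_m\,aV-a^2V,
\]
whence
\[
a^{\,n-2}I\le a^{\,n-1}(H_m-a)\,V\le\frac{(n-1)^{n-1}}{n^{n}}\,H_m^{\,n}\,V,
\]
the last inequality because $t\mapsto t^{\,n-1}(H_m-t)$ attains its maximum on $[0,H_m]$ at $t=\tfrac{n-1}{n}H_m$. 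In both cases this gives $\bigl|a^{\,n-2}I\bigr|\le\tfrac{(n-1)^{n-1}}{n^{n}}H_m^{\,n}V$.

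Now substitute into \eqref{conj}. Set $\varepsilon:=\operatorname{sgn}(\lambda)^{n}$; since $\operatorname{sgn}(\lambda)^{n-2}=\varepsilon$ as well, the relations $c_1(M)=\tfrac{\lambda}{2\pi}[\omega]$ and $\operatorname{sgn}(a)=\operatorname{sgn}(\lambda)$ give $c_1^n(M)=\varepsilon|c_1^n(M)|$, $c_2(M)c_1^{n-2}(M)=\varepsilon\,c_2(M)|c_1^{n-2}(M)|$ and $a^{\,n-2}=\varepsilon|a|^{n-2}$, so the common factor $\varepsilon$ cancels out of \eqref{conj}, which becomes
\[
0\le \tfrac{2(n+1)}{n}\,c_2(M)|c_1^{n-2}(M)|-|c_1^n(M)|=D\,|a|^{n-2}I;
\]
by the previous paragraph this is at most $D\cdot\tfrac{(n-1)^{n-1}}{n^{n}}H_m^{\,n}V$ (using $D>0$). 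Multiplying through by $n$ and inserting $D\cdot V=\tfrac{(n+1)^n(n+2)(n+3)}{2^n n(n-1)}|c_1^n(M)|$ turns $nD\cdot\tfrac{(n-1)^{n-1}}{n^{n}}H_m^{\,n}V$ into $\tfrac{(n-1)^{n-2}(n+1)^n(n+2)(n+3)}{2^n n^n}H_m^{\,n}|c_1^n(M)|$, which is exactly the first inequality; the second is then the case $H_m\le1$.

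The only substantive point is the variance estimate, where one must recognize the sharp bound $a^{\,n-1}(H_m-a)\le\tfrac{(n-1)^{n-1}}{n^{n}}H_m^{\,n}$; a cruder estimate of this quantity (for example by $H_m^{\,n}$) would fail to reproduce the stated coefficient. The remaining ingredients --- the K\"ahler-Einstein identity $V(M,x)=\tfrac{\pi^n}{n!}|c_1^n(M)|$ coming from $c_1(M)=\tfrac{\lambda}{2\pi}[\omega]$, and the cancellation of the parity-dependent sign $\varepsilon$, which requires noting that $c_1^n(M)$, $c_2(M)c_1^{n-2}(M)$ and $a^{\,n-2}$ all carry it --- are routine.
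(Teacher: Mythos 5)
Your proposal is correct and follows essentially the same route as the paper: apply Theorem \ref{formula} at $x$, convert $V(M,x)$ into $|c_1^n(M)|$, bound the variance of the holomorphic sectional curvature using the sign hypothesis, and optimize $t^{n-1}(H_m-t)$ at $t=\tfrac{n-1}{n}H_m$. Your estimate $R_{\xi\bar\xi\xi\bar\xi}^2\le H_m|R_{\xi\bar\xi\xi\bar\xi}|$ is just a repackaging of the paper's completed square $\bigl(\tfrac{R_{\xi\bar\xi\xi\bar\xi}}{H_m}+\tfrac12\bigr)^2\le\tfrac14$, and your treatment of the ``in particular'' clause (taking $H_m\le1$ under the normalization $Ric_{i\bar j}=\pm g_{i\bar j}$) matches the paper's.
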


\begin{remark}
The author and Sun (\cite{Du-Su}) prove that $c_2(M)|c_1^{n-2}(M)|$ is controlled by $|c_1^{n}(M)|$ up to a constant if $M$ is projective with ample canonical bundle.  Recently, Li and Zheng get a reverse Yau's inequality (\cite{Li-Zh}), but they need add the condition that canonical divisor is globally generated.

Moreover, using this reverse Yau's inequality, we can construct a projective manifold $M$ of any dimension $n$ $(n\ge 2)$ which is with ample canonical bundle $K_M$, but doesn't carry any hermitian metric with negative holomorphic sectional curvature (Example \ref{example dimn}).
\end{remark}

\begin{theorem}\label{ampleK}
Let $(M^n, g)$ be a compact K\"{a}hler-Einstein manifold of dimension $n$ with with Ricci curvature $\mu\neq 0$. If there exists a positive constant $H_m$, such that the holomorphic sectional curvature $H(p)$ satisfies $-H_m\le H(p)\le H_m$ for all $p$ and $x\in M$. Then we have
\begin{equation}
2(n+1)c_2(M)|c_1^{n-2}(M)| <  \left(\frac{(n+1)^2(n+2)(n+3)H_m^2}{4(n-1)}+n\right)|c_1^n(M)|
\end{equation}
for $n\ge 2$.
\end{theorem}

\begin{theorem}\label{pinch}
Let $(M^n, g)$ be a compact K\"{a}hler-Einstein manifold of dimension $n$ with Ricci curvature $\mu\neq 0$.
\begin{enumerate}
\item [1)] Suppose that $M$ is with positive holomorphic sectional curvature and $\mu>0$. If $M$ is holomorphically $\delta$-pinched, where
\[\delta>1-\left(\frac{4(n-1)}{c_1^n(M)(n+1)^{2}(n+2)(n+3)}\right)^{1/2},\] then $M$ is holomorphically isometric to $\mathbb{P}^n$.
\item [2)] Suppose that $M$ is with negative holomorphic sectional curvature and $\mu<0$. If $M$ is negative holomorphically $\delta$-pinched, where \[\delta>1-\left(\frac{4(n-1)}{|c_1^n(M)|(n+1)^{2}(n+2)(n+3)}\right)^{1/2},\] then $M$ is holomorphically isometric to a ball quotient.
\item [3)] Suppose that $M$ is not with positive or negative holomorphic sectional curvature. If $-\delta_1\le R_{\xi\bar{\xi}\xi\bar{\xi}}\le \delta_2$ $(\delta_1, \delta_2\ge0)$ for all $\xi\in \mathbb{S}^{2n-1}\subseteq T_x(M)$ and for all $x\in M$.  Let $a=max\{\delta_2-Ave(R), Ave(R)+\delta_1\}$, and \[a<\frac{2|\mu|}{n+1}\cdot\left(\frac{n-1}{|c_1(M)^{n}|(n+2)(n+3)}\right)^{\frac{1}{2}},\] then $M$ is a holomorphically isometric to a complex torus.
\end{enumerate}
\end{theorem}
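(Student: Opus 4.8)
The plan is to derive all three parts from the integral identity (\ref{conj}) of Theorem \ref{formula}, combined with three soft ingredients. The first is the \emph{integrality} of Chern numbers: $c_1(M)^n$ and $c_2(M)c_1(M)^{n-2}$ are integers, so $2(n+1)c_2(M)c_1(M)^{n-2}-nc_1(M)^n$ is an integer equal to $n$ times the left-hand side of (\ref{conj}); hence once that left-hand side is bounded in absolute value by something $<\tfrac1n$, it must be exactly $0$. The second is that for a K\"ahler--Einstein metric with $Ric=\lambda g$ the average $Ave(R)$ of the holomorphic sectional curvature over $\mathbb{S}^{2n-1}$ is a fixed nonzero multiple of the scalar curvature, hence equals $\tfrac{2\lambda}{n+1}$ identically on $M$; in particular it is a nonzero constant exactly when $Ric\neq 0$, and vanishes exactly when $M$ is Ricci-flat. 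The third is the elementary fact that a random variable taking values in an interval of length $\ell$, with mean $\mu$, has variance at most $\ell^{2}$. The structural point driving everything is that the left-hand side of (\ref{conj}) is a topological constant: a pinching hypothesis imposed at a single point $x$ controls the right-hand side of (\ref{conj}) at $x$, hence the topological number, hence — reading (\ref{conj}) again at \emph{every} point — the curvature everywhere.

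For parts 1) and 2): assume $M$ is holomorphically $\delta$-pinched at $x$ (resp.\ negative holomorphically $\delta$-pinched at $x$), so that $R_{\xi\bar\xi\xi\bar\xi}$ (resp.\ $-R_{\xi\bar\xi\xi\bar\xi}$) lies in $[\delta H_m,H_m]$ for every unit $\xi\in T_xM$; then $Ave(R)$ lies in the same interval and with the same sign, so $|R_{\xi\bar\xi\xi\bar\xi}-Ave(R)|\le(1-\delta)H_m$ and $|Ave(R)|\le H_m$. Pulling the constant $Ave(R)^{n-2}$ out of the integral in (\ref{conj}) and using the variance estimate gives $\bigl|-c_1(M)^n+\tfrac{2(n+1)}{n}c_2(M)c_1(M)^{n-2}\bigr|\le D\,V\,(1-\delta)^2 H_m^{\,n}$. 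One then eliminates $H_m$ and $\lambda$ by the K\"ahler--Einstein relations $Ave(R)=\tfrac{2\lambda}{n+1}$, $\delta H_m\le|Ave(R)|\le H_m$, and $|c_1(M)^n|=\bigl(\tfrac{|\lambda|}{2\pi}\bigr)^{n}n!\,\mathrm{Vol}(M)$, together with the explicit forms of $D$ and of $V(M,x)$, and checks that this bound is $<\tfrac1n$ precisely under the stated lower bound on $\delta$. By integrality, the left-hand side of (\ref{conj}) then vanishes. Evaluating (\ref{conj}) at an arbitrary point $y\in M$, where the prefactor is positive and $Ave(R)^{n-2}=\bigl(\tfrac{2\lambda}{n+1}\bigr)^{n-2}\neq0$ since $Ric\neq0$, forces $\int_{\mathbb{S}^{2n-1}\subseteq T_yM}(R_{\xi\bar\xi\xi\bar\xi}-Ave(R))^2\,d\mathbb{S}^{2n-1}=0$, i.e.\ $R_{\xi\bar\xi\xi\bar\xi}\equiv\tfrac{2\lambda}{n+1}$ at $y$; thus $M$ has constant holomorphic sectional curvature. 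Hence $M$ is holomorphically $1$-pinched (resp.\ negative holomorphically $1$-pinched), and the classical rigidity statements recalled in the Introduction identify $M$ holomorphically isometrically with $\mathbb{P}^n$ (\cite{Ko-No}) in case 1) and with a compact ball quotient (\cite{Ya-Zh}) in case 2).

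For part 3): now $R_{\xi\bar\xi\xi\bar\xi}\in[-\delta_1,\delta_2]$ for every unit $\xi\in T_xM$ and $Ave(R)$ lies in the same interval, so $|R_{\xi\bar\xi\xi\bar\xi}-Ave(R)|\le a=\max\{\delta_2-Ave(R),\,Ave(R)+\delta_1\}$. As before, (\ref{conj}) and the variance estimate give $\bigl|-c_1(M)^n+\tfrac{2(n+1)}{n}c_2(M)c_1(M)^{n-2}\bigr|\le D\,V\,|Ave(R)|^{n-2}a^{2}$; since $D\,V=\tfrac{(n+1)^{n-1}(n+3)!\,V(M,x)}{(2\pi)^n n(n-1)}$, the hypothesis on $|Ave(R)|$ says exactly that this bound is $<\tfrac1n$, so by integrality the left-hand side of (\ref{conj}) is $0$. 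If $n>2$ and $Ave(R)\neq0$, then $Ave(R)^{n-2}\neq0$, so (\ref{conj}) read at every point forces $R_{\xi\bar\xi\xi\bar\xi}$ to be the nonzero constant $\tfrac{2\lambda}{n+1}$; but then $M$ has everywhere positive, or everywhere negative, holomorphic sectional curvature, contradicting the hypothesis — so $Ave(R)=0$. When $n=2$ the left-hand side of (\ref{conj}) is $3c_2(M)-c_1(M)^2\in\mathbb{Z}$, which the stated bound on $a$ forces to vanish; since $Ave(R)^{n-2}=1$ this makes $R_{\xi\bar\xi\xi\bar\xi}$ constant at every point, and the hypothesis that $M$ is neither of positive nor of negative holomorphic sectional curvature excludes a nonzero value, so the constant is $0$, the full curvature tensor vanishes, $M$ is flat, and one identifies $M$ with a complex torus.

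The step I expect to be the main obstacle is the constant bookkeeping in parts 1)--2): turning the soft bound $\bigl|\cdot\bigr|\le D\,V\,(1-\delta)^2H_m^{\,n}$ into the stated threshold $1-\delta<\bigl(\tfrac{2^n(n-1)}{c_1(M)^n(n+1)^n(n+2)(n+3)}\bigr)^{1/2}$ requires clearing $H_m,\lambda,\mathrm{Vol}(M)$ and $V(M,x)$ from the estimate by means of the K\"ahler--Einstein normalization and the exact value of $D$, and checking that the crude variance bound, not a Popoviciu-type refinement, is what produces exactly those constants. Sign bookkeeping — the signs of $c_1(M)^n$, of $Ave(R)^{n-2}$ and of the left-hand side of (\ref{conj}) all depend on the parity of $n$ and the sign of $\lambda$ — is harmless here, since everything is bounded in absolute value and then forced to $0$ by integrality. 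The conceptual heart is the implication ``left-hand side of (\ref{conj}) $=0\ \Rightarrow$ constant holomorphic sectional curvature on all of $M$'', and it is precisely there that $n=2$, where $Ave(R)^{n-2}\equiv1$, has to be separated out and yields the stronger flatness conclusion.
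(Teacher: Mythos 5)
Your overall strategy is exactly the paper's: evaluate the identity of Theorem \ref{formula} at the pinched point, bound the integrand pointwise by $(1-\delta)^2H_m^2\cdot H_m^{n-2}$ (resp.\ by $a^2|Ave(R)|^{n-2}$), use integrality of $2(n+1)c_2(M)c_1^{n-2}(M)-nc_1^n(M)$ to force it to vanish once the bound is $<1$, then read the identity again at every point to get constant holomorphic sectional curvature and invoke the classical rigidity results. Your part 3), including the exact match between the hypothesis on $|Ave(R)|$ (resp.\ on $a$ when $n=2$) and the bound $<1$, and the use of the sign hypothesis to rule out a nonzero constant, is the paper's argument essentially verbatim.

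The step you flagged as the main obstacle in parts 1) and 2) is a genuine gap, and your listed ingredients do not close it. After substituting $V(M,x)=\frac{1}{n!}(\pi/\mu)^nc_1^n(M)$ into $n\,D\,V\,(1-\delta)^2H_m^n$, the bound becomes $\frac{(n+1)^n(n+2)(n+3)\,|c_1^n(M)|}{2^n(n-1)}\cdot\frac{H_m^n}{\mu^n}\cdot(1-\delta)^2$, so reaching the stated threshold requires discarding the factor $H_m^n/\mu^n$, i.e.\ it requires $H_m\le\mu$. The relations you propose to use, namely $Ave(R)=\tfrac{2\mu}{n+1}$ and $\delta H_m\le Ave(R)\le H_m$, give only $\tfrac{2\mu}{n+1}\le H_m\le\tfrac{2\mu}{\delta(n+1)}$; the upper bound introduces an extra factor $\bigl(\tfrac{2}{\delta(n+1)}\bigr)^n$ and hence a different, $\delta$-dependent pinching constant, not the one in the statement. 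The paper closes this step by simply inserting $H_m^n/\mu^n\le1$ as the last inequality in its chain (the same comparison appears as ``$H_m\le1$ since $Ric_{i\bar j}=-g_{i\bar j}$'' in the proof of Theorem \ref{rY}); this comparison between the maximal holomorphic sectional curvature and the Einstein constant is an additional geometric input that is not among your ingredients and does not follow from them, so your bookkeeping as described cannot terminate at the advertised constant. Everything else in your proposal (the variance bound, the integrality argument, the pointwise rereading of the formula, the $n=2$ flatness/torus case) agrees with the paper.
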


Using Theorem \ref{formula}, we can also confirm Yau's conjecture and Siu-Yang's conjecture for small absolute value of the holomorphic sectional curvatures at some point in $M$.

\begin{theorem}\label{small}
Let $(M^n, g)$ be a compact K\"{a}hler-Einstein manifold with Ricci curvature $\mu\neq 0$.
\begin{enumerate}
\item [1)] Suppose that $M$ is with positive holomorphic sectional curvature. If
\[0<H(p)\le\frac{4}{n+1}\left(\frac{n-1}{c_1^{n}(M)(n+2)(n+3)}\right)^{\frac{1}{2}}\] for all $x\in M$ and for all planes $p\subseteq T_x(M)$ invariant by $J$, then $M$ is holomorphically isometric to $\mathbb{P}^n$.
\item [2)] Suppose that $M$ is with negative holomorphic sectional curvature.  If  \[-\frac{4}{n+1}\left(\frac{n-1}{|c_1^{n}(M)|(n+2)(n+3)}\right)^{\frac{1}{2}}\le H(p)<0\] for all $x\in M$ and for all planes $p\subseteq T_x(M)$ invariant by $J$, then $M$ is holomorphically isometric to a ball quotient.
\end{enumerate}
\end{theorem}

\begin{remark}
If $M$ is K\"{a}hler-Einstein with positive holomorphic sectional curvature, then by Yang-Zheng's result (\cite{Yan-Zh}), Yau's conjecture is still open in general for
\[\frac{4}{n+1}\left(\frac{n-1}{c_1^{n}(M)(n+2)(n+3)}\right)^{\frac{1}{2}}<\delta<\frac{1}{2}.\]
If $M$ is K\"{a}hler-Einstein with negative holomorphic sectional curvature, Siu and Yang (\cite{Si-Ya}) impose the condition that for every point $x$ of the
manifold the average holomorphic sectional curvature at $x$ cannot be too close to
the maximum holomorphic sectional curvature at $x$, in relation to the minimum holomorphic sectional curvature at $x$. However, our conclusion is just a part result that their theorem cannot cover.

\end{remark}

\section{\textbf{Spherical Harmonics and Integration over the Sphere}}
For calculating the coefficient $D$ in Theorem \ref{formula}, we evaluate at two examples, say $\mathbb{CP}^n$ and ${(\mathbb{CP}^1)}^{\times n}$,
so we need to calculate integration over the unit sphere (Note that similar integral is also considered by Hall-Murphy in \cite{Ha-Mu} for different purpose).  We recommend
\cite{At-Ha} for the reference.

Let $S^n:=\mathbb{C}[x_1,x_2,\cdots,
x_n]$ be the polynomial ring of $n$ variables and $S^n_k$ be the set
of all homogeneous polynomials of degree $k$ of $S^n$. Then
$S^n=\oplus_{k=1}^{+\infty} S^n_k$. We also recall the notation of
double factorial,

\begin{equation} \label{!!}
m!!=\left\{
\begin{aligned}
         &m(m-2)(m-4)\cdots 2, & m ~\text{even} \\
                  &m(m-2)(m-4)\cdots 1, &m
                  ~\text{odd}.
                          \end{aligned} \right.
                          \end{equation}
In particular, we define $0!!=1$.

It is convenient to use the multi-index notation. A multi-index with
$n$ components is ${\bm{\alpha}}=(\alpha_1, \cdots, \alpha_n)$,
where $\alpha_1, \cdots, \alpha_n$ are non-negative integers. The
length of $\bm{\alpha}$ is $|\bm{\alpha}|=\sum_{j=1}^n \alpha_j$. We
write ${\bm{\alpha}}!$ to mean $\alpha_1 !\cdots \alpha_n !$. With
$\bm{x}=(x_1, \cdots, x_n)^T$ we define
\[{\bm{x}}^{\bm{\alpha}}:=x_1^{\alpha_1}\cdots x_n^{\alpha_n}.\]
Similarly, with the gradient operator
${\bm{\nabla}}=(\partial_{x_1},\cdots, \partial_{x_n})^T$, we define
\[{\bm{\nabla}}^{\bm{\alpha}}:=\frac{\partial^{|\alpha|}}{\partial x_1^{\alpha_1}\cdots \partial x_n^{\alpha_n}}.\]

Denote
\[\Delta_{(n)}:=\sum_{j=1}^n \frac{\partial^2}{\partial x_j ^2}\]to be the Laplacian operator.
We will write $\Delta$ instead of $\Delta_{(n)}$ if there is no
confusion in the context.

Any $f_k\in S^n_k$ can be written in the form
\[f_k({\bm{x}})=\sum_{|{\bm{\alpha}}|=n}a_{\bm{\alpha}}{\bm{x}}^{\bm{\alpha}}\quad a_{\bm{\alpha}}\in \mathbb{C}.\]
For this polynomial $f_k$, define
\[f_k({\bm{\nabla}})=\sum_{|{\bm{\alpha}}|=n}a_{\bm{\alpha}}{\bm{\nabla}}^{\bm{\alpha}}.\]
Given any two polynomials in $S^n_k$,
\[f_k({\bm{x}})=\sum_{|{\bm{\alpha}}|=n}a_{\bm{\alpha}}{\bm{x}}^{\bm{\alpha}},\quad g_k({\bm{x}})=\sum_{|{\bm{\alpha}}|=n}b_{\bm{\alpha}}{\bm{x}}^{\bm{\alpha}},\]
it is straightforward to show
\[f_k({\bm{\nabla}})\overline{g_k({\bm{x}})}=\sum_{|{\bm{\alpha}}|=n} {\bm{\alpha}}! a_{\bm{\alpha}}\overline{b_{\bm{\alpha}}}=\overline{g_k({\bm{\nabla}})\overline{f_k({\bm{x}})}}.\] So
\begin{equation}\label{inner}
(f_k, g_k)_{S^n_k}:=f_k({\bm{\nabla}})\overline{g_k({\bm{x}})}
\end{equation}
defines an inner product in the subspace $S^n_k$.

\begin{definition}
Denote $\mathbb{H}^n_k$ to be the space of all the homogeneous
harmonics of degree $k$ in $n$ variables, i.e., the space of all the
$f_k\in S^n_k$ such that $\Delta(f_k)=0$.
$\mathbb{SH}^n_k$:=$\mathbb{H}^n_k|_{\mathbb{S}^{n-1}}$ is called
the spherical harmonics space of order $k$ in $n$ dimensions.
\end{definition}

\begin{lemma}\label{per}
For $p\neq q$, $\mathbb{SH}^n_p\bot \mathbb{SH}^n_q$ with respect to
the $L^2$ metric on $\mathbb{S}^{n-1}$.
\end{lemma}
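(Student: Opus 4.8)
The plan is to prove the orthogonality of spherical harmonics of distinct orders by the classical Green's-identity argument. First I would recall that $\mathbb{SH}^n_p$ and $\mathbb{SH}^n_q$ consist of restrictions to $\mathbb{S}^{n-1}$ of homogeneous harmonic polynomials $f_p\in\mathbb{H}^n_p$ and $f_q\in\mathbb{H}^n_q$. Writing each harmonic polynomial in polar coordinates $\bm{x}=r\bm{\omega}$ with $r=|\bm{x}|$ and $\bm{\omega}\in\mathbb{S}^{n-1}$, homogeneity gives $f_p(\bm{x})=r^p f_p(\bm{\omega})$ and $f_q(\bm{x})=r^q f_q(\bm{\omega})$, so that the restrictions are exactly the $\mathbb{SH}$-elements. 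The aim is then to show $\int_{\mathbb{S}^{n-1}} f_p\,\overline{f_q}\,d\mathbb{S}^{n-1}=0$ when $p\neq q$.

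Next I would apply Green's second identity on the unit ball $B=\{|\bm{x}|\le 1\}$:
\begin{equation*}
\int_B \bigl(f_p\,\Delta\overline{f_q}-\overline{f_q}\,\Delta f_p\bigr)\,dV=\int_{\mathbb{S}^{n-1}}\Bigl(f_p\,\frac{\partial\overline{f_q}}{\partial\nu}-\overline{f_q}\,\frac{\partial f_p}{\partial\nu}\Bigr)\,d\mathbb{S}^{n-1}.
\end{equation*}
Since $f_p$ and $f_q$ are harmonic, the left-hand side vanishes. On $\mathbb{S}^{n-1}$ the outward normal derivative is the radial derivative $\partial/\partial r$, and Euler's identity for homogeneous functions gives $\partial f_p/\partial r=p\,f_p$ and $\partial\overline{f_q}/\partial r=q\,\overline{f_q}$ on the sphere (indeed everywhere, using homogeneity). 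Substituting yields $(q-p)\int_{\mathbb{S}^{n-1}} f_p\,\overline{f_q}\,d\mathbb{S}^{n-1}=0$, and since $p\neq q$ the integral must vanish, which is precisely the claimed orthogonality.

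The only genuine subtlety — and I expect this to be the main point to handle cleanly rather than a true obstacle — is justifying the passage between $f_p$ as a polynomial on $\mathbb{R}^n$ and its restriction $f_p|_{\mathbb{S}^{n-1}}$ as an element of $\mathbb{SH}^n_p$, i.e. making sure the $L^2(\mathbb{S}^{n-1})$ pairing in the statement matches the boundary integral above; this is immediate from homogeneity ($f_p(\bm{\omega})$ is literally the restriction). One should also note that the argument shows the map $f_p\mapsto f_p|_{\mathbb{S}^{n-1}}$ is injective on $\mathbb{H}^n_p$ (a nonzero harmonic polynomial cannot vanish on the sphere, again by homogeneity), so the orthogonality statement on $\mathbb{SH}^n_p$ is well posed. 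An alternative, if one prefers to avoid Green's identity, is to use the decomposition $\Delta(r^2 g)=r^2\Delta g+(2k+n)\,2\,g$-type recursions together with the inner product $(\,\cdot\,,\cdot\,)_{S^n_k}$ introduced in~\eqref{inner}, but the Green's-identity proof is shortest and self-contained.
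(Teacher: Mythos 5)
Your proof is correct and complete: Green's second identity on the unit ball, harmonicity of $f_p$ and $\overline{f_q}$, and Euler's relation $\partial f_k/\partial r = k f_k$ on $\mathbb{S}^{n-1}$ give $(q-p)\int_{\mathbb{S}^{n-1}} f_p\overline{f_q}\,d\mathbb{S}^{n-1}=0$, which is exactly the claim. The paper itself states this lemma without proof (it is the classical orthogonality of spherical harmonics, covered by the cited reference of Atkinson--Han), so there is no in-paper argument to compare against; your Green's-identity derivation is the standard one, and your side remarks on the injectivity of restriction to the sphere and the matching of the $L^2$ pairing with the boundary integral are both accurate.
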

In particular, Lemma \ref{per} implies
\begin{equation}\label{sorth}
\int_{\mathbb{S}^{n-1}} h_j({\bm{x}})~d\mathbb{S}^{n-1}=0, \quad
\text{for any}~h_j\in \mathbb{H}^n_j,~ j\ge 1.
\end{equation}
\begin{theorem}(\cite{At-Ha} Theorem 2.18)\label{dec}
With respect to the inner product (\ref{inner}), we have following
decomposition
\begin{equation}\label{dec form}
S^n_k=\mathbb{H}^n_k\oplus
|\cdot|^2\mathbb{H}^{n}_{k-2}\oplus\cdots\oplus|\cdot|^{2[k/2]}\mathbb{H}^{n}_{k-2[k/2]}.
\end{equation}
\end{theorem}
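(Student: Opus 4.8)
The plan is to derive the decomposition (\ref{dec form}) from its single-step case, the \emph{Fischer splitting}
\[
S^n_k=\mathbb{H}^n_k\oplus|\bm{x}|^2 S^n_{k-2},\qquad |\bm{x}|^2:=x_1^2+\cdots+x_n^2\in S^n_2,
\]
and then iterating. The heart of the matter is that, with respect to the inner product (\ref{inner}), the adjoint of the multiplication map $M\colon S^n_{k-2}\to S^n_k$, $f\mapsto|\bm{x}|^2 f$, is exactly the Laplacian $\Delta\colon S^n_k\to S^n_{k-2}$.

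To prove this, recall that $P\mapsto P(\bm{\nabla})$ is an algebra homomorphism from $\mathbb{C}[x_1,\dots,x_n]$ into the commutative algebra of constant-coefficient differential operators, and that $|\bm{x}|^2(\bm{\nabla})=\Delta$. Hence for $f\in S^n_{k-2}$ and $g\in S^n_k$, writing $(|\bm{x}|^2 f)(\bm{\nabla})=f(\bm{\nabla})\,\Delta$ and using that $\Delta$ has real coefficients (so it commutes with complex conjugation),
\[
(|\bm{x}|^2 f,\,g)_{S^n_k}=(|\bm{x}|^2 f)(\bm{\nabla})\,\overline{g(\bm{x})}=f(\bm{\nabla})\,\overline{\Delta g(\bm{x})}=(f,\,\Delta g)_{S^n_{k-2}}.
\]
Since (\ref{inner}) is positive definite on the finite-dimensional space $S^n_k$ (it equals $\sum_{|\bm{\alpha}|=k}\bm{\alpha}!\,|a_{\bm{\alpha}}|^2$), the orthogonal complement of $\operatorname{Im}M=|\bm{x}|^2 S^n_{k-2}$ is $\ker M^{*}=\ker(\Delta\colon S^n_k\to S^n_{k-2})=\mathbb{H}^n_k$, which yields the Fischer splitting as an orthogonal direct sum.

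To conclude, apply the Fischer splitting to $S^n_{k-2}$, then to $S^n_{k-4}$, and so on, stopping at $S^n_{k-2[k/2]}$, which is either the space of constants or the space of linear forms and hence is annihilated by $\Delta$, so it coincides with its own harmonic subspace. Multiplication by $|\bm{x}|^2$ is injective because $\mathbb{C}[x_1,\dots,x_n]$ is an integral domain, so multiplying the successive splittings by the powers $|\bm{x}|^{2j}$ preserves directness of the sums inside $S^n_k$; collecting the pieces gives (\ref{dec form}). The only step requiring real care is the adjoint identity above; the remainder is the homomorphism bookkeeping together with the standard fact that a linear map between finite-dimensional inner-product spaces splits its target as the image plus the kernel of the adjoint.
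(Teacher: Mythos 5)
Your proof is correct, and it is worth noting that the paper offers no argument at all for this statement --- it is quoted verbatim from Atkinson--Han (their Theorem 2.18) --- so your write-up supplies a proof where the source has none. What you give is the standard Fischer-decomposition argument: the adjoint identity $(|\bm{x}|^2 f,\,g)_{S^n_k}=(f,\,\Delta g)_{S^n_{k-2}}$, the positive-definiteness of the pairing (\ref{inner}), the resulting orthogonal splitting $S^n_k=\mathbb{H}^n_k\oplus|\bm{x}|^2S^n_{k-2}$, and the iteration using injectivity of multiplication by $|\bm{x}|^2$; all of these steps are sound. One small refinement: as written, your iteration establishes (\ref{dec form}) as a \emph{direct} sum and the orthogonality of $\mathbb{H}^n_k$ to the remaining summands, but not quite the mutual orthogonality of all the pieces $|\bm{x}|^{2i}\mathbb{H}^n_{k-2i}$, because multiplication by $|\bm{x}|^2$ is injective but not an isometry for (\ref{inner}), so it need not carry the orthogonal splitting of $S^n_{k-2}$ to an orthogonal splitting of $|\bm{x}|^2S^n_{k-2}$. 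If full orthogonality is wanted (the phrase ``with respect to the inner product'' suggests it is part of the claim), it follows from your own adjoint identity: for $i<j$ and harmonic $h,h'$ one has $(|\bm{x}|^{2i}h,\,|\bm{x}|^{2j}h')=(h,\,\Delta^i(|\bm{x}|^{2j}h'))=c\,(h,\,|\bm{x}|^{2(j-i)}h')=c\,\overline{(h',\,\Delta^{j-i}h)}=0$, using that $\Delta^i(|\bm{x}|^{2j}h')$ is a constant multiple of $|\bm{x}|^{2(j-i)}h'$ (this is exactly the computation recorded in (\ref{tLap})). For the way the theorem is actually used in the paper --- only the existence of the expansion (\ref{decf}) --- the direct-sum statement you proved is all that is needed.
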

Theorem \ref{dec} means that for $f_k\in S^n_k$, we have
\begin{equation}\label{decf}
f_k({\bm{x}})=\sum_{j=0}^{[k/2]}|{\bm{x}}|^{2j}h_{k-2j}({\bm{x}}),
\end{equation}
where $h_{k}\in \mathbb{H}^n_k$.

Applying the Laplacian operator $\Delta^t$ to both sides of
(\ref{decf}), we have
\begin{equation}\label{tLap}
\Delta^t(f_k({\bm{x}}))=\sum_{j=t}^{[k/2]}\frac{(2j)!!(n+2d-2j-2)!!}{(2j-2t)!!(n+2k-2j-2t-2)!!}|{\bm{x}}|^{2(j-t)}h_{k-2j}({\bm{x}}),
\end{equation}
where $h_{k}\in \mathbb{H}^n_k$. In particular, for $k$ even,
\begin{equation}\label{h0}
\Delta^{k/2}f_k({\bm{x}})=\frac{k!!(n+k-2)!!}{(n-2)!!}h_0({\bm{x}}).
\end{equation}

Let $f_k\in S^n_k$, by (\ref{sorth}), (\ref{decf}) and (\ref{h0}),
we have
\begin{equation} \label{int}
\int_{\mathbb{S}^{n-1}} f_k~d\mathbb{S}^{n-1}=\left\{
\begin{aligned}
         &0&& k ~\text{odd} \\
                  &\frac{(n-2)!! \text{vol}(\mathbb{S}^{n-1})}{k!!(n+k-2)!!}\Delta^{k/2}(f_k)&&k
                  ~\text{even}.
                          \end{aligned} \right.
                          \end{equation}
Note that for $n$ even and $f_k\in \mathbb{H}^n_k$,
$\Delta^{k/2}(f_k)$ is a constant.

\section{\textbf{Invariant theory and a formula of Chern numbers}}
Let $(M^n, g)$ be a compact K\"{a}hler-Einstein manifold of dimension $n$ with Ricci curvature $\mu\neq 0$. Then its Ricci form is equal to its K\"{a}hler form up to a constant. That is,
\[\textbf{i}\sum_{i,j}Ric_{i\bar{j}}dz_i\wedge d\bar{z}_j=\mu \textbf{i}\sum_{i,j} g_{i\bar{j}}(x)dz_i\wedge d\bar{z}_j,\]
where $\textbf{i}=\sqrt{-1}, 1\le i, j\le n$.
Denote $\Theta=(\Theta_{\alpha}^{\beta})$, where $1\le \alpha, \beta\le n$, to be the curvature matrix. Since the first Chern form of $M$ is the Ricci form of the tangent bundle of $M$, we have
\[c_1(\Theta)=\frac{\textbf{i}}{2\pi}\sum_{i,j}Ric_{i\bar{j}}dz_i\wedge d\bar{z}_j=\frac{\textbf{i}}{2\pi}\sum_{i,j} \mu g_{i\bar{j}}(x)dz_i\wedge d\bar{z}_j.\]
For $x\in M$, we can choose the local coordinates such that $g_{i\bar{j}}(x)=\delta_{ij}(x)$. So
\[c_1^n(M)=n!\left(\frac{\mu}{\pi}\right)^n\int_M d\tilde{v},\]
where $d\tilde{v}=\left(\frac{\textbf{i}}{2}\right)^ndz_{1}\wedge d\bar{z}_{1}\wedge\cdots\wedge dz_{n}\wedge d\bar{z}_{n}$.

So the volume of $M$ is
\begin{equation}\label{Vc1}
V(M)=\frac{1}{n!}\left(\frac{\pi}{\mu}\right)^nc_1^n(M)
\end{equation}
if $\mu\neq 0$.

Suppose that $T_x(M)$ is the tangent space of
$M$ at $x$ and $V:=\text{vol}(\mathbb{S}^{2n-1})$ is the volume of the
unit sphere. Denote
\[\text{Ave}(R):=\frac{1}{V}\int_{\xi\in
\mathbb{S}^{2n-1}\subseteq T_x(M)}
R_{\xi\bar{\xi}\xi\bar{\xi}}~d\mathbb{S}^{2n-1}\] to be the average of integral of the holomorphic sectional curvature along the unit sphere in the tangent space of $M$ at some point $x\in M$. We omit
``$\xi\in
\mathbb{S}^{2n-1}\subseteq T_x(M)$ '' under the ``$\int$" and $d\mathbb{S}^{2n-1}$ in
the later context if there is no confusion in the context, for example,
\[\text{Ave}(R):=\frac{1}{V}\int
R_{\xi\bar{\xi}\xi\bar{\xi}}.\]

Let
\begin{equation} \label{inv}
\left\{ \begin{aligned}
         R^{(m)}_1 &=\sum_{\alpha_1,\cdots,\alpha_m;\atop \beta_1\cdots,\beta_m}\prod_{t=1}^m R_{\alpha_t\bar{\alpha_t}\beta_t\bar{\beta_t}},\quad\quad m\ge 1\\
                  R^{(m)}_2&=\sum_{i,j,k,l;\atop \alpha_1,\cdots,\alpha_{m-2};\beta_1\cdots,\beta_{m-2}} |R_{i\bar{j}k\bar{l}}|^2
                  \prod_{t=1}^{m-2}
                  R_{\alpha_t\bar{\alpha_t}\beta_t\bar{\beta_t}},\quad\quad m\ge 2.
                          \end{aligned} \right.
                          \end{equation}
where $$1\le i,j,k,l, \alpha_1,\cdots, \alpha_m,\beta_1,\cdots,\beta_m\le n.$$

The following theorem is known (see \cite{Bes} page 133) which comes from Weyl's invariant theory on classical groups (\cite{Wey}).

\begin{theorem}
Any polynomial of degree $2$ in the curvature tensors which is invariant under unitary transformation can be represented in the three basis with coefficients in $\mathbb{R}$:
\[\{\sum_{i,j,k,l}R_{i\bar{i}j\bar{j}}R_{k\bar{k}l\bar{l}},\sum_{i,j,k,l}|R_{i\bar{j}k\bar{l}}|^2, \sum_{i,j}|Ric_{ij}|^2\} .\]
Any polynomial of degree $1$ in the curvature tensors which is invariant under unitary transformation can be represented in the two basis with coefficients in $\mathbb{R}$:
\[\{\sum_{i,j}R_{i\bar{i}j\bar{j}}, \sum_{i,j}Ric_{ij}\} .\]
\end{theorem}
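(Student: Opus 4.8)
The plan is to derive both assertions from the first fundamental theorem of invariant theory for the classical group $GL(n,\mathbb{C})$ --- equivalently, via Weyl's unitarian trick, for $U(n)$ --- applied to the curvature tensor at a point. I would fix $x\in M$, work in a unitary frame of $V:=T_x^{1,0}M$, and regard the curvature as a tensor $R=(R_{i\bar j k\bar l})$ carrying its symmetries $R_{i\bar j k\bar l}=R_{k\bar j i\bar l}=R_{i\bar l k\bar j}$ together with the Hermitian symmetry $\overline{R_{i\bar j k\bar l}}=R_{j\bar i l\bar k}$. After complexification, a degree-$m$ $U(n)$-invariant polynomial in the entries of $R$ becomes a $GL(n,\mathbb{C})$-invariant polynomial of $m$ copies of $R$; since unbarred and barred index positions transform contragrediently, the first fundamental theorem says every such invariant is a linear combination of complete contractions, i.e. expressions formed by matching each unbarred slot among the $m$ factors to a barred slot and summing over all indices. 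As each $R$ has exactly two unbarred and two barred slots, a degree-$m$ complete contraction corresponds to a perfect matching of $2m$ unbarred slots with $2m$ barred slots, taken up to the internal symmetries of $R$ and permutation of the $m$ factors, so the whole problem reduces to enumerating such matchings.

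For $m=1$ I would match the unbarred indices $i,k$ of $R_{i\bar j k\bar l}$ with the barred indices $\bar j,\bar l$: the ``uncrossed'' matching gives the scalar curvature $\sum_{i,j}R_{i\bar i j\bar j}$, and the ``crossed'' matching is, by the curvature symmetries, a partial trace yielding $\sum_i Ric_{i\bar i}$ with $Ric_{i\bar j}:=\sum_k R_{i\bar j k\bar k}$ (the trace appearing as $\sum_{i,j}Ric_{ij}$ in the statement); these account for the degree-$1$ invariants. For $m=2$ I would enumerate the perfect matchings of $\{i,k,a,c\}$ with $\{\bar j,\bar l,\bar b,\bar d\}$ in $R_{i\bar j k\bar l}R_{a\bar b c\bar d}$, up to the symmetries of each factor, the swap of the two factors, and the curvature symmetries; these fall into three orbits --- each factor contracted with itself, giving $\sum_{i,j,k,l}R_{i\bar i j\bar j}R_{k\bar k l\bar l}$; each factor half-traced to a Ricci tensor with the two Riccis contracted against one another, giving $\sum_{i,j}|Ric_{ij}|^2$; and the two factors fully intertwined, giving $\sum_{i,j,k,l}|R_{i\bar j k\bar l}|^2$ --- so these three tensors span the degree-$2$ invariants. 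Reality of the coefficients is then automatic: the Hermitian symmetry $\overline{R_{i\bar j k\bar l}}=R_{j\bar i l\bar k}$ makes each complete contraction a real-valued function of $R$, so a real invariant expands with real coefficients.

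The remaining point is to upgrade ``spanning'' to a genuine basis, i.e. to check linear independence of the listed invariants; this I would do by evaluating on explicit model curvature tensors chosen so that the corresponding value-matrix ($2\times 2$ in degree $1$, $3\times 3$ in degree $2$) is nonsingular --- the curvatures of $\mathbb{CP}^n$, of a flat torus, and of a product such as $(\mathbb{CP}^1)^{\times n}$ suffice, and these are exactly the models the paper invokes afterwards to pin down the constant $D$ of Theorem~\ref{formula}. I expect the only real work to be combinatorial bookkeeping: running the orbit count of perfect matchings under the full symmetry group without omitting or double-counting a case, and keeping careful track of which symmetries are imposed on the curvature tensor --- the first Bianchi identity in the Kähler case, say --- since imposing more symmetries can merge some contractions and thereby change the number of independent generators.
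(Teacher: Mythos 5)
The paper offers no proof of this statement at all: it is quoted as a known consequence of Weyl's invariant theory, with a citation to Besse (p.~133). So your proposal is not an alternative to the paper's argument but a reconstruction of the standard one behind the citation, and in its main lines it is the right reconstruction: by the unitarian trick a polynomial $U(n)$-invariant of $R\in V^{\otimes 2}\otimes(\overline V)^{\otimes 2}\cong V^{\otimes 2}\otimes (V^*)^{\otimes 2}$ is a $GL(V)$-invariant, the first fundamental theorem reduces everything to complete contractions, and the orbit count of perfect matchings under the K\"ahler symmetries $R_{i\bar jk\bar l}=R_{k\bar ji\bar l}=R_{i\bar lk\bar j}$ gives exactly the three quadratic contractions (both factors self-traced; partial traces to Ricci paired with each other; fully intertwined, which the Hermitian symmetry identifies with $\sum|R_{i\bar jk\bar l}|^2$). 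Your closing caveat about symmetries merging contractions is exactly the point that makes the count come out to three rather than more. This establishes the spanning statement, which is all the paper actually uses (Corollary \ref{invariant} immediately collapses the list further using the Einstein condition).

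The one step that would fail as written is your verification of linear independence. The three test tensors you propose cannot work: the flat torus has $R=0$ and contributes a zero row, while $\mathbb{CP}^n$ and $(\mathbb{CP}^1)^{\times n}$ are both K\"ahler--Einstein and therefore both satisfy the linear relation $n\sum_{i,j}|Ric_{ij}|^2=\sum_{i,j,k,l}R_{i\bar ij\bar j}R_{k\bar kl\bar l}$ (this is precisely the identity the paper exploits in Corollary \ref{invariant}), so every row of your $3\times 3$ value matrix lies in a fixed $2$-plane and the matrix is singular no matter what. To separate $\sum|Ric_{ij}|^2$ from the other two generators you need a non-Einstein model, e.g.\ an abstract algebraic curvature tensor or a product such as $\mathbb{CP}^1\times(\text{flat torus})$. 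A similar issue afflicts the degree-$1$ case: for a K\"ahler metric the two listed contractions $\sum_{i,j}R_{i\bar ij\bar j}$ and $\sum_{i,j}Ric_{ij}$ are literally the same scalar (both equal the scalar curvature), so they are not independent and no choice of examples will show otherwise; this is a defect of the statement's use of the word ``basis'' rather than of your matching enumeration, and it is harmless downstream since only the spanning property is ever invoked.
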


\begin{corollary}\label{invariant}
If K\"{a}hler-Einstein condition satisfies, then any polynomial of degree $2$ in the curvature tensors which is invariant under unitary transformation can be represented in the two basis with coefficients in $\mathbb{R}$:
\[\{R_1^{(2)},R_2^{(2)}\}\] and any polynomial of degree $1$ in the curvature tensors which is invariant under unitary transformation can be represented in the one basis with coefficients in $\mathbb{R}$:
\[\{R_1^{(1)}\}.\]
\end{corollary}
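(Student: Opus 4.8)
The plan is to start from Weyl's classification recalled in the theorem just above and then use the Kähler--Einstein identity to collapse the three-dimensional space of degree-two unitary invariants and the two-dimensional space of degree-one unitary invariants down to the asserted bases.

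First I would fix $x\in M$ and choose local holomorphic coordinates with $g_{i\bar j}(x)=\delta_{ij}$, so that the Einstein condition reads $Ric_{i\bar j}(x)=\mu\,\delta_{ij}$. The crucial observation is that, although $\mu$ is a priori just a real constant attached to the metric, it is in fact itself a curvature invariant: contracting the curvature tensor to form the Ricci tensor and then taking its trace gives
\[
R_1^{(1)}=\sum_{i,j}R_{i\bar i j\bar j}=\sum_i Ric_{i\bar i}=n\mu ,
\]
hence $\mu=\tfrac1n R_1^{(1)}$. For the degree-one statement this already suffices: the second basis element becomes $\sum_{i,j}Ric_{i\bar j}=\sum_i Ric_{i\bar i}=n\mu=R_1^{(1)}$, so it is a scalar multiple of the first basis element and the space of degree-one unitary invariants is spanned by $R_1^{(1)}$ alone.

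For the degree-two statement I would next evaluate the remaining ``extra'' basis element $\sum_{i,j}|Ric_{i\bar j}|^2$. Under the Einstein normalization this equals $\sum_i\mu^2=n\mu^2$, and substituting $\mu=\tfrac1n R_1^{(1)}$ together with the tautological identity $(R_1^{(1)})^2=\sum_{i,j,k,l}R_{i\bar i j\bar j}R_{k\bar k l\bar l}=R_1^{(2)}$ yields $\sum_{i,j}|Ric_{i\bar j}|^2=\tfrac1n R_1^{(2)}$. Since the other two basis elements are, directly from the definitions in \eqref{inv}, $\sum_{i,j,k,l}R_{i\bar i j\bar j}R_{k\bar k l\bar l}=R_1^{(2)}$ and $\sum_{i,j,k,l}|R_{i\bar j k\bar l}|^2=R_2^{(2)}$, it follows that every degree-two unitary invariant lies in $\mathrm{span}_{\mathbb R}\{R_1^{(2)},R_2^{(2)}\}$, which is the claim.

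The computation is essentially routine; the only point that genuinely requires care is the first step — recognizing that the Einstein factor $\mu$, which enters the identities through the metric rather than through the curvature, can be rewritten as the invariant $\tfrac1n R_1^{(1)}$, so that all metric contributions hidden in $\sum Ric_{i\bar j}$ and $\sum|Ric_{i\bar j}|^2$ are absorbed back into genuine curvature invariants. One should also verify the contraction convention relating $Ric_{i\bar j}$ to $R_{i\bar j k\bar k}$ so that the constant in $R_1^{(1)}=n\mu$ is correct, but no sign subtleties affect the final spanning conclusion.
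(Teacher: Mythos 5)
Your proposal is correct and follows essentially the same route as the paper: choose coordinates with $g_{i\bar j}(x)=\delta_{ij}$, use the Einstein condition to evaluate $\sum_{i,j}Ric_{i\bar j}=n\mu=R_1^{(1)}$ and $\sum_{i,j}|Ric_{i\bar j}|^2=n\mu^2=\tfrac1n R_1^{(2)}$, and conclude that the extra Weyl basis elements are absorbed into $R_1^{(1)}$ and $R_1^{(2)}$. The only cosmetic difference is that you make explicit the (correct) identities $\mu=\tfrac1n R_1^{(1)}$ and $(R_1^{(1)})^2=R_1^{(2)}$, which the paper leaves implicit.
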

\begin{proof}
Use the condition of K\"{a}hler-Einstein and choose the local coordinates such that $g_{i\bar{j}}(x)=\delta_{ij}(x)$ for $x\in M$, then
\[\sum_{i,j}|Ric_{ij}|^2=\sum_{i,j}\mu^2|g_{i\bar{j}}|^2=\mu^2n,\]
\[\sum_{i,j,k,l}R_{i\bar{i}j\bar{j}}R_{k\bar{k}l\bar{l}}=\sum_{i,j}Ric_{i\bar{i}}Ric_{j\bar{j}}=\mu^2g_{i\bar{i}}g_{j\bar{j}}=\mu^2n^2.\]
Thus,
\[\sum_{i,j}|Ric_{ij}|^2=\frac{1}{n}\sum_{i,j,k,l}R_{i\bar{i}j\bar{j}}R_{k\bar{k}l\bar{l}}.\]
Similarly,
\[\sum_{i,j}Ric_{ij}=\sum_{i,j}\mu g_{i\bar{j}}=\mu n,\]
\[\sum_{i,j}R_{i\bar{i}j\bar{j}}=\sum_{i}Ric_{i\bar{i}}=\mu \sum_{i}g_{i\bar{i}}=\mu n.\]
Thus,
\begin{equation}\label{Ric=R}
\sum_{i,j}Ric_{ij}=\sum_{i,j}R_{i\bar{i}j\bar{j}}.
\end{equation}
\end{proof}

Since $\int R_{\xi\bar{\xi}\xi\bar{\xi}}^2$ and $\int
R_{\xi\bar{\xi}\xi\bar{\xi}}$ are invariant under unitary
transformation, by Corollary \ref{invariant}, we can assume
\begin{equation}\label{deg2com}
\int R_{\xi\bar{\xi}\xi\bar{\xi}}^2=A\cdot R^{(2)}_1+B\cdot
R^{(2)}_2
\end{equation}
and
\begin{equation}\label{deg2com}
\int R_{\xi\bar{\xi}\xi\bar{\xi}}=C\cdot R^{(1)}_1,
\end{equation}
where $A, B, C \in \mathbb{R}$.

We will get these coefficients by evaluating at two examples, say $\mathbb{CP}^n$ and ${(\mathbb{CP}^1)}^{\times n}$.
Let $\xi\in\mathbb{S}^{2n-1}=\{(z_1,\cdots, z_n)|\sum_i |z_i|^2=1\}$. Assume $g$ to be the K\"{a}hler-Einstein metric. For $x\in M$, we can choose the local coordinates such that $g_{i\bar{j}}(x)=\delta_{ij}(x)$.

Case I: $\mathbb{CP}^n$.

Let $g_{FS}=\sum g_{i\bar{j}} dz_i\otimes d\bar{z}_j$ be the Fubini-Study metric . At the point $x$, we have
\[R_{i\bar{j}k\bar{l}}=g_{i\bar{j}}g_{k\bar{l}}+g_{i\bar{l}}g_{k\bar{j}}=\delta_{ij}\delta_{kl}+\delta_{il}\delta_{kj}\quad i,j,k,l=1,2,\cdots n.\]
So \[R^{(1)}_1=n(n+1)\quad R^{(2)}_1=n^2(n+1)^2, \quad R^{(2)}_2=2n(n+1).\]
On the other hand, \[\int R_{\xi\bar{\xi}\xi\bar{\xi}}=2\int (\sum_{i=1}^n |z_i|^2)^2=2V\]
and
\[\int R_{\xi\bar{\xi}\xi\bar{\xi}}^2=4\int (\sum_{i=1}^n |z_i|^2)^4=4V.\]
So $\text{Ave}(R)=2$ and $C=\frac{2}{n(n+1)}$.

Therefore, in general,
\begin{equation}\label{AveR}
\text{Ave}(R)=\frac{2}{n(n+1)}R^{(1)}_1
\end{equation}
and
\begin{equation}\label{equ1}
n^2(n+1)^2A+2n(n+1)B=4V.
\end{equation}

Case II: ${(\mathbb{CP}^1)}^{\times n}$.

Let $g$ be the $n$ times product metric of Fubini-Study metric of $\mathbb{P}^1$.
Then
\[R_{i\bar{i}i\bar{i}}=2~ \text{for all}~ i=1,2,\cdots, n~ \text{and others are}~ 0.\]
So \[R^{(2)}_1=4n^2, \quad R^{(2)}_2=4n.\]
On the other hand, suppose $z_i=x_i+y_i\mathbf{i}$, then by (\ref{int})

\begin{equation} \label{}
\begin{split}
\int R_{\xi\bar{\xi}\xi\bar{\xi}}^2&=4\int (\sum_{i=1}^n |z_i|^4)^2\\
 &= 4\int (\sum_{i=1}^n (x_i^2+y_i^2)^2)^2\\
 &= 4\frac{(2n-2)!!V}{8!!(2n+8-2)!!}\Delta^4(H_8),
 \end{split}
 \end{equation}
where $H_8:=(\sum_{i=1}^n (x_i^2+y_i^2)^2)^2$.

Calculating directly, we get
\begin{equation} \label{}
\begin{split}
\Delta(H_8)&=\Delta((\sum_{i=1}^n (x_i^2+y_i^2)^2)^2);\\
 &= 32\sum_{i=1}^n (x_i^2+y_i^2)^2\cdot \sum_{i=1}^n (x_i^2+y_i^2)+32\sum_{i=1}^n (x_i^2+y_i^2)^3;\\
\Delta^2(H_8)&= 32(16\sum_{i=1}^n (x_i^2+y_i^2)^2+(4n+52)\sum_{i=1}^n (x_i^2+y_i^2)^2);\\
\Delta^3(H_8)&= 32(64(3n+15)\sum_{i=1}^n (x_i^2+y_i^2));\\
\Delta^4(H_8)&=2^{13}n(3n+15).
 \end{split}
 \end{equation}
So
\begin{equation} \label{}
\begin{split}
\int R_{\xi\bar{\xi}\xi\bar{\xi}}^2&=4\frac{(2n-2)!!V}{8!!(2n+8-2)!!}\cdot 2^{13}n(3n+15)\\
 &= \frac{16(n+5)V}{(n+1)(n+2)(n+3)}.
 \end{split}
 \end{equation}
Combining equation (\ref{equ1}), we have
 \begin{equation} \label{}
\left\{ \begin{aligned}
         n^2(n+1)^2A&+2n(n+1)B &=&4V\\
                  4n^2A&+4nB&=& \frac{16(n+5)V}{(n+1)(n+2)(n+3)},
                          \end{aligned} \right.
                          \end{equation}
and get

 \begin{equation} \label{}
\left\{ \begin{aligned}
        A&=\frac{4(n+4)V}{n^2(n+1)(n+2)(n+3)}\\
                 B&= \frac{4V}{n(n+1)(n+2)(n+3)}.
                          \end{aligned} \right.
                          \end{equation}

So
\begin{equation} \label{}
\begin{split}
&\int (R_{\xi\bar{\xi}\xi\bar{\xi}}-Ave(R))^2\\
=&\frac{4(n+4)V}{n^2(n+1)(n+2)(n+3)}R_1^{(2)}+ \frac{4V}{n(n+1)(n+2)(n+3)}R_2^{(2)}-\frac{4V}{n^2(n+1)^2}R_1^{(2)}\\
=&4V\left(\frac{-2}{n^2(n+1)^2(n+2)(n+3)}R_1^{(2)}+\frac{1}{n(n+1)(n+2)(n+3)}R_2^{(2)}\right)
 \end{split}
 \end{equation}
and
 \begin{equation} \label{biggerthan0}
\begin{split}
&\int (R_{\xi\bar{\xi}\xi\bar{\xi}}-Ave(R))^2 Ave(R)^{n-2}\\
=&4V\frac{2^{n-2}}{n^{n-2}(n+1)^{n-2}}\left(\frac{-2}{n^2(n+1)^2(n+2)(n+3)}R_1^{(n)}+\frac{1}{n(n+1)(n+2)(n+3)}R_2^{(n)}\right)\\
=&\frac{2^{n}V}{n^{n}(n+1)^{n}(n+2)(n+3)}(-2R_1^{(n)}+n(n+1)R_2^{(n)}).
 \end{split}
 \end{equation}

Let $g$ be the K\"{a}hler-Einstein metric. For $x\in M$, we can choose the local coordinates such that $g_{i\bar{j}}(x)=\delta_{ij}(x)$. Denote $\Theta=(\Theta_{\alpha}^{\beta})$ to be the curvature matrix where $\Theta_{\alpha}^{\beta}=\sum \Theta_{\alpha,i,j}^{\beta}dz_i\wedge d\bar{z}_j$ and $R=\sum R_{i\bar{j}k\bar{l}}dz_i\otimes d\bar{z}_j\otimes dz_k\otimes d\bar{z}_l$ to be the curvature tensor.
We have
\begin{equation}\label{thetaR}
R_{i\bar{j}k\bar{l}}=\sum_p g_{pj}\Theta^p_{ik\bar{l}}=\Theta^j_{ik\bar{l}}.
\end{equation}

Let $\sigma$ be a permutation in symmetric group $S_n$ and let $m_1, \cdots, m_s$ be the distinct integers which appear in the cycle type of $\sigma$ (including 1-cycle). For each $i\in \{1,2,\cdots, n\}$, assume $\sigma$ has $k_i$ cycles of length $m_i$. So $\sum_{i=1}^s k_im_i=n$. Denote $g(\{(m_i,k_i); n\}_{i=1}^s)$ to be the number of conjugates of $\sigma$. The following result is well known.

\begin{lemma}\label{cn}
\begin{equation}
g(\{(m_i,k_i)\}_{i=1}^s;n)=\frac{n!}{\prod_{i=1}^sk_i!m_i^{k_i}}.
\end{equation}
\end{lemma}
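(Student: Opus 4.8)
The plan is to identify the permutations with the prescribed cycle type as a single conjugacy class and apply the orbit--stabilizer theorem. The group $S_n$ acts on itself by conjugation, and two permutations are conjugate in $S_n$ exactly when they share the same cycle type; hence the permutations having the cycle type of $\sigma$ form precisely the orbit of $\sigma$ under this action, so their number $g(\{(m_i,k_i)\}_{i=1}^s;n)$ equals $|S_n|/|C_{S_n}(\sigma)| = n!/|C_{S_n}(\sigma)|$, where $C_{S_n}(\sigma)$ is the centralizer of $\sigma$. It then remains only to show $|C_{S_n}(\sigma)| = \prod_{i=1}^s k_i!\,m_i^{k_i}$.

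To compute the centralizer I would use the standard conjugation formula: for $\tau \in S_n$, the cycle decomposition of $\tau\sigma\tau^{-1}$ is obtained from that of $\sigma$ by replacing every entry $a$ by $\tau(a)$. Thus $\tau$ commutes with $\sigma$ if and only if applying $\tau$ to the entries of the cycle decomposition of $\sigma$ permutes that collection of cycles among themselves. Such a $\tau$ must carry each $m_i$-cycle to an $m_i$-cycle; among the $k_i$ cycles of length $m_i$ it may induce an arbitrary permutation ($k_i!$ choices), and once it is decided which cycle maps to which, a cycle of length $m_i$ admits exactly $m_i$ cyclic "offsets" for matching up its entries with the target ($m_i^{k_i}$ choices in total over the $k_i$ cycles). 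These choices are independent over all cycle lengths $i$, and conversely each such choice determines a unique $\tau$ centralizing $\sigma$; hence $|C_{S_n}(\sigma)| = \prod_{i=1}^s k_i!\,m_i^{k_i}$, which gives the claimed formula.

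An equivalent route is to count directly: write $1,\dots,n$ in a row in $n!$ ways and parse the row, for each $i$, into $k_i$ consecutive blocks of length $m_i$, each block read as a cycle. Every permutation of the given type arises, but each arises $\prod_i m_i^{k_i}$ times from cyclically rotating the individual blocks and $\prod_i k_i!$ times from reordering blocks of equal length, yielding the same quotient. The only point needing a touch of care is that no further coincidences occur, i.e. that distinct data (block contents up to rotation, block order up to equal-length reshuffling) yield distinct permutations; this is immediate from the uniqueness of the cycle decomposition. I do not anticipate a real obstacle here --- the work is entirely in stating the conjugation-action and cycle-decomposition facts precisely; everything else is bookkeeping.
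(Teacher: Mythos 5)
Your proof is correct: the orbit--stabilizer argument with the centralizer of order $\prod_{i=1}^s k_i!\,m_i^{k_i}$ is the standard derivation of the size of a conjugacy class in $S_n$, and your alternative direct count is also sound. The paper offers no proof of this lemma at all (it is simply stated as well known), so your write-up supplies a complete argument consistent with the intended, classical fact.
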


\begin{definition}
A partition of positive integer $n$ is a finite decreasing sequence $\lambda$ of natural number  $\lambda_1\ge \lambda_2\ge \cdots\ge \lambda_{l(\lambda)}>0$ and the weight $|\lambda|:=\sum_{i=1}^{l(\lambda)}\lambda_i=n$. The $l(\lambda)$ is called the length of $\lambda$.
\end{definition}

In order to calculate the first and the second Chern forms in terms of $R_1^{(n)}$ and $R_2^{(n)}$, the crucial steps are following identities.
\begin{equation*}
\begin{split}
\sum_{\alpha_1,\dots, \alpha_n}\sum_{\sigma\in S_n} \prod_{i=1}^n R_{\alpha_i\bar{\alpha_i}i\overline{{\sigma(i)}}}
=&\sum_{\sigma\in S_n} \prod_{i=1}^n Ric_{i\overline{{\sigma(i)}}}\\
=&\mu^n\sum_{\sigma\in S_n} \prod_{i=1}^n \delta_{i\overline{{\sigma(i)}}}\\
=&\mu^n\sum_{|\lambda|=n}g(\{(m_i,k_i)\}_{i=1}^s;n)n^{l(\lambda)}
\end{split}
\end{equation*}
and
\begin{equation}\label{Berger}
\begin{split}
R^{(n)}_1 &=\sum_{\alpha_1,\cdots,\alpha_n;\atop \beta_1,\cdots,\beta_n}\prod_{t=1}^n R_{\alpha_t\bar{\alpha_t}\beta_t\bar{\beta_t}}\\
&=\sum_{\beta_1,\cdots,\beta_n}\prod_{t=1}^n Ric_{\beta_t\bar{\beta_t}}\\
&=\mu^n \sum_{\beta_1,\cdots,\beta_n}\prod_{t=1}^n \delta_{\beta_t\bar{\beta_t}}\\
&=\mu^nn^n.
\end{split}
\end{equation}

Thus,
\begin{equation}\label{RR1}
\sum_{\alpha_1,\dots, \alpha_n}\sum_{\sigma\in S_n} \prod_{i=1}^n R_{\alpha_i\bar{\alpha_i}i\overline{{\sigma(i)}}}=\sum_{|\lambda|=n}g(\{(m_i,k_i)\}_{i=1}^s;n)\frac{R^{(n)}_1}{n^{n-l(\lambda)}}.
\end{equation}

\begin{equation*}
\begin{split}
&\sum_{\alpha_1,\dots, \alpha_n}\sum_{\sigma\in S_n} \prod_{i=1}^{n-2} R_{\alpha_i\bar{\alpha_i}i\overline{{\sigma(i)}}}R_{\alpha_{n-1}\bar{\alpha}_nn-1\overline{{\sigma(n-1)}}}R_{\alpha_{n}\bar{\alpha}_{n-1}n\overline{{\sigma(n)}}}\\
=&\sum_{\alpha_{n-1}, \alpha_n}\sum_{\sigma\in S_n} \prod_{i=1}^{n-2} Ric_{i\overline{{\sigma(i)}}}R_{\alpha_{n-1}\bar{\alpha}_nn-1\overline{{\sigma(n-1)}}}R_{\alpha_{n}\bar{\alpha}_{n-1}n\overline{{\sigma(n)}}}\\
=&\mu^{n-2}\sum_{\alpha_{n-1}, \alpha_n}\sum_{\sigma\in S_n} \prod_{i=1}^{n-2} \delta_{i\overline{{\sigma(i)}}}R_{\alpha_{n-1}\bar{\alpha}_nn-1\overline{{\sigma(n-1)}}}R_{\alpha_{n}\bar{\alpha}_{n-1}n\overline{{\sigma(n)}}}\\
=&\mu^n\sum_{|\lambda|=n}n^{l(\lambda)-1}g(\{(m_i,k_i)\}_{i=1}^s;n-2) +\mu^{n-2}\sum_{|\lambda|=n}n^{l(\lambda)-1}g(\{(m_i,k_i)\}_{i=1}^s;n-2) \sum_{i,j,k,l}|R_{i\bar{j}k\bar{l}}|^2
\end{split}
\end{equation*}
and
\begin{equation*}
\begin{split}
 R^{(n)}_2&=\sum_{i,j,k,l;\atop \alpha_1,\cdots,\alpha_{n-2};\beta_1,\cdots,\beta_{n-2}} |R_{i\bar{j}k\bar{l}}|^2
 \prod_{t=1}^{n-2}R_{\alpha_t\bar{\alpha_t}\beta_t\bar{\beta_t}}\\
 &=\sum_{i,j,k,l;\atop \beta_1,\cdots,\beta_{n-2}} |R_{i\bar{j}k\bar{l}}|^2
 \prod_{t=1}^{n-2}Ric_{\beta_t\bar{\beta_t}}\\
 &=\mu^{n-2}\sum_{i,j,k,l;\atop \beta_1,\cdots,\beta_{n-2}} |R_{i\bar{j}k\bar{l}}|^2
 \prod_{t=1}^{n-2}\delta_{\beta_t\bar{\beta_t}}\\
 &=\mu^{n-2}n^{n-2}\sum_{i,j,k,l} |R_{i\bar{j}k\bar{l}}|^2.
 \end{split}
 \end{equation*}

We set \[G_s:=\{\sigma\in S_n| n~ \text{and}~ n-1 ~\text{are in the same cycle of } ~\sigma\},\]
and \[G_d:=\{\sigma\in S_n| n~ \text{and}~ n-1 ~\text{are in two different cycles of } ~\sigma\}.\]
Thus,
\begin{equation}\label{RR2}
\begin{split}
&\sum_{\alpha_1,\dots, \alpha_n}\sum_{\sigma\in S_n} \prod_{i=1}^{n-2} R_{\alpha_i\bar{\alpha_i}i\overline{{\sigma(i)}}}R_{\alpha_{n-1}\bar{\alpha}_nn-1\overline{{\sigma(n-1)}}}R_{\alpha_{n}\bar{\alpha}_{n-1}n\overline{{\sigma(n)}}}\\
=&\sum_{\alpha_1,\dots, \alpha_n}\sum_{\sigma\in G_d} \prod_{i=1}^{n-2} R_{\alpha_i\bar{\alpha_i}i\overline{{\sigma(i)}}}R_{\alpha_{n-1}\bar{\alpha}_nn-1\overline{{\sigma(n-1)}}}R_{\alpha_{n}\bar{\alpha}_{n-1}n\overline{{\sigma(n)}}}\\
&\mbox \qquad \qquad\qquad+\sum_{\alpha_i,i}\sum_{\sigma\in G_s} \prod_{i=1}^{n-2} R_{\alpha_i\bar{\alpha_i}i\overline{{\sigma(i)}}}R_{\alpha_{n-1}\bar{\alpha}_nn-1\overline{{\sigma(n-1)}}}R_{\alpha_{n}\bar{\alpha}_{n-1}n\overline{{\sigma(n)}}}\\
=&\sum_{|\lambda|=n-\iota\atop \iota_1+\iota_2=\iota}C_{n-2}^{\iota_1-1}C_{n-\iota_1-1}^{\iota_2-1}g(\{(m_i,k_i)\}_{i=1}^s;n-\iota)g(\{(\iota_1,1)\};\iota_1)g(\{(\iota_2,1)\};\iota_2)\frac{R_1^{(n)}}{n^{n-l(\lambda)-1}}\\ &\mbox \qquad \qquad  +\sum_{|\lambda|=n-\iota}C_{n-2}^{\iota_1-2}g(\{(m_i,k_i)\}_{i=1}^s;n-\iota)g(\{(\iota,1)\};\iota)\frac{R_2^{(n)}}{n^{n-l(\lambda)-2}}\\
=&\sum_{|\lambda|=n-\iota}\frac{(n-2)!(\iota-1)}{(n-\iota)!}g(\{(m_i,k_i)\}_{i=1}^s;n-\iota)\frac{R_1^{(n)}}{n^{n-l(\lambda)-1}}\\
&\mbox \qquad \qquad  +\sum_{|\lambda|=n-\iota}\frac{(n-2)!(\iota-1)}{(n-\iota)!}g(\{(m_i,k_i)\}_{i=1}^s;n-\iota)\frac{R_2^{(n)}}{n^{n-l(\lambda)-2}}.
\end{split}
\end{equation}

\begin{lemma}\label{repre}
Let \[p_k=\sum_{j=1}^n x_j^k\in \mathbb{Z}[x_1, x_2, \dots x_n]\] and \[p_\lambda=\prod_{j=1}^{l(\lambda)} p_{\lambda_j},\] where $\lambda: \lambda_1\ge \lambda_2\ge \cdots\ge \lambda_{l(\lambda)}>0$  is a partition of $n$.

Then \[\prod_{j=1}^n x_j=\sum_{|\lambda|=n}(-1)^{n+l(\lambda)}\frac{1}{\prod_i k_i!m_i^{k_i}}p_\lambda.\]

In particular, \[\sum_{|\lambda|=n}(-1)^{n+l(\lambda)}\frac{n^{l(\lambda)}}{\prod_i k_i!m_i^{k_i}}=1.\]
\end{lemma}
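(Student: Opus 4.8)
The plan is to recognize the left-hand side $\prod_{j=1}^n x_j$ as the top-degree elementary symmetric polynomial $e_n(x_1,\dots,x_n)$ and to prove the stated identity as the classical expansion of $e_n$ in the power-sum basis of symmetric functions. The cleanest route is through generating functions and the exponential formula.

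First I would record the generating-function identity $\sum_{k\ge 0}e_k(x)t^k=\prod_{j=1}^n(1+x_jt)$. Taking the formal logarithm of the right-hand side, expanding $\log(1+x_jt)=\sum_{m\ge 1}\frac{(-1)^{m-1}}{m}x_j^mt^m$, and summing over $j$ gives $\log\prod_{j=1}^n(1+x_jt)=\sum_{m\ge 1}\frac{(-1)^{m-1}}{m}p_mt^m$, hence
\[
\sum_{k\ge 0}e_k(x)t^k=\exp\!\Big(\sum_{m\ge 1}\tfrac{(-1)^{m-1}}{m}p_mt^m\Big)=\prod_{m\ge 1}\sum_{k_m\ge 0}\frac{1}{k_m!}\Big(\tfrac{(-1)^{m-1}}{m}p_mt^m\Big)^{k_m}.
\]

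Next I would extract the coefficient of $t^n$ from both sides. On the left it is $e_n=\prod_{j=1}^n x_j$. On the right, a term contributing to $t^n$ is indexed by a sequence $(k_m)_{m\ge 1}$ of nonnegative integers with $\sum_m m k_m=n$, which is exactly a partition $\lambda$ of $n$ whose distinct part sizes $m_i$ occur with multiplicities $k_i$, with $l(\lambda)=\sum_m k_m$. Its contribution is $\prod_m\frac{1}{k_m!}\frac{(-1)^{(m-1)k_m}}{m^{k_m}}p_m^{k_m}=\frac{(-1)^{\sum_m(m-1)k_m}}{\prod_i k_i!\,m_i^{k_i}}p_\lambda$, and since $\sum_m(m-1)k_m=n-l(\lambda)$ we have $(-1)^{\sum_m(m-1)k_m}=(-1)^{n-l(\lambda)}=(-1)^{n+l(\lambda)}$. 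Summing over all such $\lambda$ yields precisely $\prod_j x_j=\sum_{|\lambda|=n}(-1)^{n+l(\lambda)}\frac{1}{\prod_i k_i!m_i^{k_i}}p_\lambda$. The "in particular" assertion is then immediate by specializing $x_1=\dots=x_n=1$: then $\prod_j x_j=1$ and $p_k=n$ for all $k$, so $p_\lambda=n^{l(\lambda)}$ and the identity reads $1=\sum_{|\lambda|=n}(-1)^{n+l(\lambda)}\frac{n^{l(\lambda)}}{\prod_i k_i!m_i^{k_i}}$.

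The only delicate point is the bookkeeping: matching the exponential-formula weights $\prod_m 1/k_m!$ together with the powers $1/m^{k_m}$ to the stated denominator $\prod_i k_i!m_i^{k_i}$, and tracking the sign $(-1)^{(m-1)k_m}$ across the product. Everything else is the standard passage between $\exp$ of power sums and symmetric functions. One could alternatively argue by induction on $n$ using Newton's identities $\sum_{i=0}^{n-1}(-1)^i e_i p_{n-i}+(-1)^n n e_n=0$, but the generating-function argument is more transparent and self-contained.
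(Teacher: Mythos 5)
Your argument is correct. The identity you prove is the classical expansion of the elementary symmetric polynomial $e_n=\prod_{j=1}^n x_j$ in the power-sum basis, $e_n=\sum_{|\lambda|=n}(-1)^{n+l(\lambda)}z_\lambda^{-1}p_\lambda$ with $z_\lambda=\prod_i k_i!\,m_i^{k_i}$, and your bookkeeping is accurate: the sign $(-1)^{\sum_m (m-1)k_m}=(-1)^{n-l(\lambda)}=(-1)^{n+l(\lambda)}$ and the weight $\prod_m k_m!\,m^{k_m}$ come out exactly as stated, and the specialization $x_1=\cdots=x_n=1$ (so $p_\lambda=n^{l(\lambda)}$) gives the second identity. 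The difference from the paper is one of completeness rather than substance: the paper does not prove the first identity at all, but simply cites it from Manivel (page 35) and then performs the same specialization $x_j=1$ that you do. Your generating-function derivation via $\sum_k e_k t^k=\prod_j(1+x_jt)=\exp\bigl(\sum_{m\ge 1}\tfrac{(-1)^{m-1}}{m}p_mt^m\bigr)$ and extraction of the coefficient of $t^n$ makes the lemma self-contained, at the cost of a little formal-power-series bookkeeping; the paper's citation is shorter but leaves the reader to look up the standard symmetric-function fact. Your alternative suggestion via Newton's identities would likewise work and is essentially how the cited source can be read.
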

\begin{proof}
See \cite{Man} page 35. Taking $x_j=1$ for $j=1,2, \dots, n$, we get the last identity.
\end{proof}

\begin{lemma}\label{12}
For $x\in M$, we can choose the local coordinates such that $g_{i\bar{j}}(x)=\delta_{ij}(x)$. Then
\begin{align}
&\sum_{\alpha_1,\dots, \alpha_n}\bigwedge_{i=1}^n \Theta_{\alpha_i}^{\alpha_i}=\frac{(n-1)!}{n^{n-1}}R_1^{(n)}dv,\\
&\sum_{\alpha_1,\dots, \alpha_n}\bigwedge_{j=1}^{n-2} \Theta_{\alpha_j}^{\alpha_j}\bigwedge \Theta_{\alpha_{n-1}}^{\alpha_n}\bigwedge \Theta_{\alpha_n}^{\alpha_{n-1}}=\left(\frac{(n-2)!}{n^{n-1}}R_1^{(n)}-\frac{(n-2)!}{n^{n-2}}R_2^{(n)}\right)dv,
\end{align}
where $dv=dz_{1}\wedge d\bar{z}_{1}\wedge\cdots\wedge dz_{n}\wedge d\bar{z}_{n}$.
\end{lemma}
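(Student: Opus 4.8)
The plan is to expand each curvature matrix entry in the normal coordinates at $x$ and to reduce the wedge products of $(1,1)$-forms that occur to multiples of $dv$, contracting the internal indices $\alpha_1,\dots,\alpha_n$ with the help of the K\"{a}hler--Einstein equation. By (\ref{thetaR}), at the chosen point $\Theta_\alpha^\beta=\sum_{k,l}R_{\alpha\bar\beta k\bar l}\,dz_k\wedge d\bar z_l$, so that $\sum_\alpha\Theta_\alpha^\alpha=\sum_{k,l}Ric_{k\bar l}\,dz_k\wedge d\bar z_l=\mu\sum_k dz_k\wedge d\bar z_k$, where $Ric_{k\bar l}=\sum_\alpha R_{\alpha\bar\alpha k\bar l}$ and the last equality is the K\"{a}hler--Einstein condition. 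I will use the standard symmetries $R_{i\bar j k\bar l}=R_{k\bar l i\bar j}$ and $\overline{R_{i\bar j k\bar l}}=R_{j\bar i l\bar k}$, together with $\sum_p R_{p\bar p i\bar j}=Ric_{i\bar j}=\mu\delta_{ij}$, to reduce every partial contraction to $\mu\delta_{ij}$ or to $\sum_{i,j,k,l}|R_{i\bar j k\bar l}|^2$; and at the end I will substitute the relations $R_1^{(n)}=\mu^n n^n$ and $R_2^{(n)}=\mu^{n-2}n^{n-2}\sum_{i,j,k,l}|R_{i\bar j k\bar l}|^2$ established just above. Since $\alpha_1,\dots,\alpha_n$ are summed independently, the first identity is then immediate:
\[
\sum_{\alpha_1,\dots,\alpha_n}\bigwedge_{i=1}^n\Theta_{\alpha_i}^{\alpha_i}=\Bigl(\sum_\alpha\Theta_\alpha^\alpha\Bigr)^{\wedge n}=\mu^n\Bigl(\sum_k dz_k\wedge d\bar z_k\Bigr)^{\wedge n}=\mu^n\,n!\,dv=\frac{(n-1)!}{n^{n-1}}R_1^{(n)}\,dv,
\]
using $\bigl(\sum_k dz_k\wedge d\bar z_k\bigr)^{\wedge n}=n!\,dv$.

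For the second identity I separate the two ``transposed'' factors. As $\alpha_1,\dots,\alpha_{n-2}$ occur only in the first $n-2$ factors and $\alpha_{n-1},\alpha_n$ only in the last two,
\[
\sum_{\alpha}\Bigl(\bigwedge_{j=1}^{n-2}\Theta_{\alpha_j}^{\alpha_j}\Bigr)\wedge\Theta_{\alpha_{n-1}}^{\alpha_n}\wedge\Theta_{\alpha_n}^{\alpha_{n-1}}=\Bigl(\sum_\alpha\Theta_\alpha^\alpha\Bigr)^{\wedge(n-2)}\wedge\Bigl(\sum_{a,b}\Theta_a^b\wedge\Theta_b^a\Bigr),
\]
and $\bigl(\sum_\alpha\Theta_\alpha^\alpha\bigr)^{\wedge(n-2)}=\mu^{n-2}(n-2)!\sum_{|I|=n-2}\bigwedge_{k\in I}(dz_k\wedge d\bar z_k)$, the sum being over $(n-2)$-element subsets $I\subseteq\{1,\dots,n\}$. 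Wedging $\bigwedge_{k\in I}(dz_k\wedge d\bar z_k)$ with $\sum_{a,b}\Theta_a^b\wedge\Theta_b^a$ retains only the part of the latter built from $dz_p,d\bar z_p,dz_q,d\bar z_q$, where $\{p,q\}$ is the complement of $I$. Keeping track of orientations --- notably $(dz_p\wedge d\bar z_q)\wedge(dz_q\wedge d\bar z_p)=-(dz_p\wedge d\bar z_p)\wedge(dz_q\wedge d\bar z_q)$ --- one gets that the left-hand side above equals $dv$ times
\[
\mu^{n-2}(n-2)!\sum_{p\neq q}\sum_{a,b}\bigl(R_{a\bar b p\bar p}R_{b\bar a q\bar q}-R_{a\bar b p\bar q}R_{b\bar a q\bar p}\bigr).
\]

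It remains to evaluate the bracket. Completing each double sum to all pairs $p,q$, the two diagonal ($p=q$) contributions both equal $\sum_{p,a,b}|R_{a\bar b p\bar p}|^2$ by the reality relation, and therefore cancel. The first full sum equals $\mu^2 n$, because $\sum_p R_{a\bar b p\bar p}=\sum_p R_{p\bar p a\bar b}=\mu\delta_{ab}$; the second full sum equals $\sum_{i,j,k,l}|R_{i\bar j k\bar l}|^2$, again by the reality relation. Hence the bracket is $\mu^2 n-\sum_{i,j,k,l}|R_{i\bar j k\bar l}|^2$, and substituting $\mu^n n=R_1^{(n)}/n^{n-1}$ and $\mu^{n-2}\sum_{i,j,k,l}|R_{i\bar j k\bar l}|^2=R_2^{(n)}/n^{n-2}$ gives $\bigl(\frac{(n-2)!}{n^{n-1}}R_1^{(n)}-\frac{(n-2)!}{n^{n-2}}R_2^{(n)}\bigr)dv$, which is the second identity.

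I expect the only genuinely delicate point to be the orientation bookkeeping in the second identity: getting the minus sign in front of $R_2^{(n)}$ out of the wedge reduction of $\sum_{a,b}\Theta_a^b\wedge\Theta_b^a$, and checking that the two diagonal contributions cancel rather than add. An alternative that avoids the subset/orientation analysis is to expand the wedge products as signed sums over pairs of permutations of $\{1,\dots,n\}$, group those permutations by cycle type, and carry out the sum over cycle types using Lemma \ref{cn} and the symmetric-function identity of Lemma \ref{repre} --- the same mechanism behind (\ref{RR1}) and (\ref{RR2}) above; the route sketched here is shorter because it contracts the internal indices before unravelling the forms.
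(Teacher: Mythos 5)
Your proof is correct, and it takes a genuinely different route from the paper's. The paper expands each wedge product as a signed sum over permutations $\sigma\in S_n$ attached to $dz_1\wedge d\bar z_{\sigma(1)}\wedge\cdots\wedge dz_n\wedge d\bar z_{\sigma(n)}$, groups the permutations by cycle type, and resums using Lemma \ref{cn} together with the power-sum identity of Lemma \ref{repre}; for the second identity it even determines the two coefficients indirectly, by positing an ansatz $x_1\frac{R_1^{(n)}}{n^n}+x_2\frac{R_2^{(n)}}{n^{n-2}}$ and solving a small linear system. You instead contract the internal indices first: the K\"ahler--Einstein condition collapses $\sum_\alpha\Theta_\alpha^\alpha$ to $\mu\sum_k dz_k\wedge d\bar z_k$, so the first identity reduces to $\bigl(\sum_k dz_k\wedge d\bar z_k\bigr)^{\wedge n}=n!\,dv$, and the second to a two-index wedge computation on $\sum_{a,b}\Theta_a^b\wedge\Theta_b^a$. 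Your delicate steps all check out: the sign $(dz_p\wedge d\bar z_q)\wedge(dz_q\wedge d\bar z_p)=-(dz_p\wedge d\bar z_p)\wedge(dz_q\wedge d\bar z_q)$ is right and produces the minus in front of $R_2^{(n)}$; the two diagonal ($p=q$) contributions are both $\sum_{p,a,b}|R_{a\bar b p\bar p}|^2$ and so drop out of the difference; and the full sums evaluate to $\mu^2 n$ (via $\sum_pR_{a\bar bp\bar p}=Ric_{a\bar b}=\mu\delta_{ab}$, using the K\"ahler symmetry) and to $\sum_{i,j,k,l}|R_{i\bar jk\bar l}|^2$, after which the substitutions $R_1^{(n)}=\mu^nn^n$ and $R_2^{(n)}=\mu^{n-2}n^{n-2}\sum|R_{i\bar jk\bar l}|^2$ give exactly the stated coefficients. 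What your approach buys is brevity and transparency --- no cycle-type combinatorics at all; what it gives up is the intermediate permutation-level identities (\ref{RR1}) and (\ref{RR2}), which the paper records in a form that does not yet presuppose collapsing the Ricci contractions. Since both arguments invoke the K\"ahler--Einstein condition in an essential way, nothing is lost in generality for the lemma as stated.
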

\begin{proof}
By (\ref{RR1}), (\ref{RR2}), Lemma \ref{cn} and Lemma \ref{repre} we have
\begin{equation}\label{}
\begin{split}
 &\sum_{\alpha_1,\dots, \alpha_n}\bigwedge_{i=1}^n \Theta_{\alpha_i}^{\alpha_i}\\
=& \sum_{\alpha_1,\dots, \alpha_n}\bigwedge_{i=1}^n (\sum_{k,l}\Theta_{\alpha_ik\overline{l}}^{\alpha_i}dz_{k}\wedge d\bar{z}_l)\\
=& \sum_{\alpha_1,\dots, \alpha_n}\bigwedge_{i=1}^n (\sum_{k,l}R_{\alpha_i\overline{\alpha}_i k\overline{l}}dz_{k}\wedge d\bar{z}_l)\\
=& \sum_{\alpha_1,\dots, \alpha_n}\sum_{\sigma\in S_n} \prod_{i=1}^n R_{\alpha_i\bar{\alpha_i}i\overline{{\sigma(i)}}}dz_{1}\wedge d\bar{z}_{\sigma{(1)}}\wedge\cdots\wedge dz_{n}\wedge d\bar{z}_{\sigma{(n)}}\\
=&\sum_{|\lambda|=n}(-1)^{n+l(\lambda)}\frac{n!}{\prod_i k_i!m_i^{k_i}}\cdot\frac{R_1^{(n)}}{n^{n-l(\lambda)}} dz_{1}\wedge d\bar{z}_{1}\wedge\cdots\wedge dz_{n}\wedge d\bar{z}_{n}\\
=&\frac{(n-1)!}{n^{n-1}}R_1^{(n)}dv.
\end{split}
\end{equation}

\begin{equation}\label{conj2}
\begin{split}
&\sum_{\alpha_1,\dots, \alpha_n}\bigwedge_{i=1}^{n-2} \Theta_{\alpha_i}^{\alpha_i}\bigwedge \Theta_{\alpha_{n-1}}^{\alpha_n}\bigwedge \Theta_{\alpha_n}^{\alpha_{n-1}}\\
=&\sum_{\alpha_1,\dots, \alpha_n}\bigwedge_{i=1}^{n-2}(\sum_{k,l}\Theta_{\alpha_ik\overline{l}}^{\alpha_i}dz_{k}\wedge d\bar{z}_{l})\wedge (\sum_{k,l} \Theta_{\alpha_{n-1}k\overline{l}}^{\alpha_{n}}dz_{k}\wedge d\bar{z}_{l})\\
&\mbox\qquad\wedge(\sum_{k,l}\Theta_{\alpha_{n}k\overline{l}}^{\alpha_{n-1}}dz_{k}\wedge d\bar{z}_{l})\\
=&\sum_{\alpha_1,\dots, \alpha_n}\bigwedge_{i=1}^{n-2}(\sum_{k,l}R_{\alpha_i\overline{\alpha}_i k\overline{l}}dz_{k}\wedge d\bar{z}_{l})\wedge (\sum_{k,l} R_{\alpha_{n-1}\overline{\alpha}_{n}k\overline{l}}dz_{k}\wedge d\bar{z}_{l})\\
&\mbox\qquad\wedge(\sum_{k,l}R_{\alpha_{n}\overline{\alpha}_{n-1}k\overline{l}}dz_{k}\wedge d\bar{z}_{l})\\
=& \sum_{\alpha_1,\dots, \alpha_n}\sum_{\sigma\in S_n} \prod_{i=1}^{n-2} R_{\alpha_i\bar{\alpha_i}i\overline{{\sigma(i)}}}R_{\alpha_{n-1}\bar{\alpha}_nn-1\overline{{\sigma(n-1)}}}R_{\alpha_{n}\bar{\alpha}_{n-1}n\overline{{\sigma(n)}}}\\
&\mbox\qquad dz_{1}\wedge d\bar{z}_{\sigma{(1)}}\wedge\cdots\wedge dz_{n}\wedge d\bar{z}_{\sigma{(n)}}\\
=&(\sum_{|\lambda|=n-\iota}(-1)^{n+l(\lambda)}\frac{(n-2)!(\iota-1)}{(n-\iota)!}g(\{(m_i,k_i)\}_{i=1}^s;n-\iota)\frac{R_1^{(n)}}{n^{n-l(\lambda)-1}}\\
&\mbox \qquad  +\sum_{|\lambda|=n-\iota}(-1)^{n+l(\lambda)+1}\frac{(n-2)!(\iota-1)}{(n-\iota)!}g(\{(m_i,k_i)\}_{i=1}^s;n-\iota)\frac{R_2^{(n)}}{n^{n-l(\lambda)-2}})dv.
\end{split}
\end{equation}
Assume
\begin{equation}
\begin{split}
&\sum_{|\lambda|=n-\iota}(-1)^{n+l(\lambda)}\frac{(n-2)!(\iota-1)}{(n-\iota)!}g(\{(m_i,k_i)\}_{i=1}^s;n-\iota)\frac{R_1^{(n)}}{n^{n-l(\lambda)-1}}\\
&\mbox \qquad \qquad  +\sum_{|\lambda|=n-\iota}(-1)^{n+l(\lambda)+1}\frac{(n-2)!(\iota-1)}{(n-\iota)!}g(\{(m_i,k_i)\}_{i=1}^s;n-\iota)\frac{R_2^{(n)}}{n^{n-l(\lambda)-2}}\\
=&x_1\frac{R_1^{(n)}}{n^n}+x_2\frac{R_2^{(n)}}{n^{n-2}}.\\
\end{split}
\end{equation}
It is obviously that $x_1= -nx_2$. Moreover, by (\ref{RR2}), Lemma \ref{cn} and Lemma \ref{repre}, we have
\begin{equation}
\begin{split}
nx_1+nx_2=&\sum_{|\lambda|=n}(-1)^{n+l(\lambda)}g(\{(m_i,k_i)\}_{i=1}^s;n)\cdot\frac{R_1^{(n)}}{n^{n-l(\lambda)}}\\
=&\sum_{|\lambda|=n}(-1)^{n+l(\lambda)}\frac{n!}{\prod_i k_i!m_i^{k_i}}\cdot\frac{R_1^{(n)}}{n^{n-l(\lambda)}}\\
=&n!.
\end{split}
\end{equation}
So we solve for $x_1$ and $x_2$ and get
 \begin{equation}
\left\{ \begin{aligned}
        x_1&=n\cdot (n-2)!\\
                 x_2&= -(n-2)!.
                          \end{aligned} \right.
                          \end{equation}
Finally, we get
\begin{equation}
\begin{split}
&\sum_{\alpha_1,\dots, \alpha_n}\bigwedge_{i=1}^{n-2} \Theta_{\alpha_j}^{\alpha_j}\bigwedge \Theta_{\alpha_{n-1}}^{\alpha_n}\bigwedge \Theta_{\alpha_n}^{\alpha_{n-1}}\\
=&\left(\frac{(n-2)!}{n^{n-1}}R_1^{(n)}-\frac{(n-2)!}{n^{n-2}}R_2^{(n)}\right)dv.
\end{split}
\end{equation}
\end{proof}
Now, we can prove Theorem \ref{formula}.
\begin{proof}
Suppose the K\"{a}hler form $g_{i\bar{j}}(x)=\delta_{ij}(x)$. Then by Lemma \ref{12} and (\ref{biggerthan0}), we have
\begin{equation} \label{}
\begin{split}
&c_1^n(\Theta)-\frac{2(n+1)}{n}c_1^{n-2}(\Theta)c_2(\Theta)\\
=~&\frac{\textbf{i}^n}{(2\pi)^n}\left(\sum_{\alpha_1,\dots, \alpha_n}\bigwedge_{i=1}^n \Theta_{\alpha_i}^{\alpha_i}-\frac{n+1}{n}\left(\sum_{\alpha_1,\dots, \alpha_n}\bigwedge_{i=1}^n \Theta_{\alpha_i}^{\alpha_i}-\sum_{\alpha_1,\dots, \alpha_n}\bigwedge_{i=1}^{n-2} \Theta_{\alpha_i}^{\alpha_i}\bigwedge \Theta_{\alpha_{n-1}}^{\alpha_n}\bigwedge \Theta_{\alpha_n}^{\alpha_{n-1}}\right)\right)\\
=~&\frac{\textbf{i}^n}{(2\pi)^n}\left(-\frac{1}{n}\sum_{\alpha_1,\dots, \alpha_n}\bigwedge_{i=1}^n \Theta_{\alpha_i}^{\alpha_i}+\frac{n+1}{n}\sum_{\alpha_1,\dots, \alpha_n}\bigwedge_{i=1}^{n-2} \Theta_{\alpha_i}^{\alpha_i}\bigwedge \Theta_{\alpha_{n-1}}^{\alpha_n}\bigwedge \Theta_{\alpha_n}^{\alpha_{n-1}}\right)\\
=~&\frac{\textbf{i}^n}{(2\pi)^n}\left(-\frac{1}{n}\frac{(n-1)!}{n^{n-1}}R_1^{(n)}+\frac{n+1}{n}\left(\frac{(n-2)!}{n^{n-1}}R_1^{(n)}-\frac{(n-2)!}{n^{n-2}}R_2^{(n)}\right)\right)dv\\
=~&\frac{\textbf{i}^n}{(2\pi)^n}\left(\frac{2(n-2)!}{n^n}R_1^{(n)}-\frac{(n+1)(n-2)!}{n^{n-1}}R_2^{(n)}\right)dv\\
=~&\frac{\textbf{i}^n}{(2\pi)^n}\frac{(n-2)!}{n^n}(2R_1^{(n)}-n(n+1)R_2^{(n)})dv\\
=~&-\frac{\textbf{i}^n}{(2\pi)^n}\frac{(n+1)^{n}(n+2)(n+3)(n-2)!}{2^{n}\cdot V}\int (R_{\xi\bar{\xi}\xi\bar{\xi}}-Ave(R))^2 Ave(R)^{n-2}d\mathbb{S}^{2n-1}\cdot dv.
 \end{split}
 \end{equation}
So integrating both sides over $M$, we have
 \begin{equation} \label{}
\begin{split}
&-c_1^n(M)+\frac{2(n+1)}{n}c_2(M)c_1^{n-2}(M) \\
=~&\frac{\textbf{i}^n}{(2\pi)^n}\frac{(n+1)^{n-1}(n+3)!}{2^{n}n(n-1)\cdot V}\int_M\left(\int (R_{\xi\bar{\xi}\xi\bar{\xi}}-Ave(R))^2 Ave(R)^{n-2}d\mathbb{S}^{2n-1}\right)dv\\
=~&\frac{(n+1)^{n-1}(n+3)!}{(2\pi)^{n}n(n-1)\cdot V}\cdot\left(\frac{2\mu}{n+1}\right)^{n-2}\cdot\left(\frac{\textbf{i}}{2}\right)^n\int_M\left(\int (R_{\xi\bar{\xi}\xi\bar{\xi}}-Ave(R))^2 d\mathbb{S}^{2n-1}\right)dv\\
=~&\frac{(n+1)\cdot(n+3)!\mu^{n-2}}{4(\pi)^{n}n(n-1)\cdot V}\cdot\int_M\left(\int (R_{\xi\bar{\xi}\xi\bar{\xi}}-Ave(R))^2 d\mathbb{S}^{2n-1}\right)d\tilde{v},
 \end{split}
 \end{equation}
 where the second equation holds by Berger’s lemma, namely $Ave(R)=\frac{2\mu}{n+1}$ or from (\ref{AveR}) and (\ref{Berger}).
\end{proof}

\begin{remark}
Yau's celebrated solution to the Calabi's conjecture (\cite{Ya1})
says that a compact K\"{a}hler $n$-fold $M$ with negative or
trivial tangent bundle always admit K\"{a}hler-Einstein
metrics. As an application, he proved in \cite{Ya2} that for the
case $c_1(M)<0$, there is a Chern number inequality:
\[(-1)^n(2(n+1)c_2(M)c_1^{n-2}(M)-nc_1^n(M))\ge 0,\]
with equality hold if and only if $M$ is uniformized by the unit
ball in $\mathbb{C}^n$.
From this theorem, we can get Yau's famous inequality and characterise the equality by using \cite{Ko-No} Theorem 7.5.
\end{remark}

From Theorem \ref{formula}, we can have a reverse Yau's inequality (Theorem \ref{rY}).
\begin{proof}
Suppose that the holomorphic sectional curvature at is nonpositive. We assume $Ric_{i\bar{j}}=-g_{i\bar{j}}$ without loss of generality.
\begin{equation}\label{rev}
\begin{split}
& -n|c_1^{n}(M)|+2(n+1)c_2(M)|c_1^{n-2}(M)|\\
=&\frac{(n+1)\cdot(n+3)!}{4(\pi)^{n}(n-1)\cdot V}\cdot\int_M\left(\int (R_{\xi\bar{\xi}\xi\bar{\xi}}-Ave(R))^2 d\mathbb{S}^{2n-1}\right)d\tilde{v}\\
=&\frac{(n+1)\cdot(n+3)!H_m^2}{4(\pi)^{n}(n-1)\cdot V}\cdot\int_M\left(\int \left(\frac{R_{\xi\bar{\xi}\xi\bar{\xi}}}{H_m}-\frac{Ave(R)}{H_m}\right)^2 d\mathbb{S}^{2n-1}\right)d\tilde{v}\\
=&\frac{(n+1)\cdot(n+3)!H_m^2}{4(\pi)^{n}(n-1)}\cdot\int_M\left(\frac{\int\left(\frac{R_{\xi\bar{\xi}\xi\bar{\xi}}}{H_m}+\frac{1}{2}\right)^2 d\mathbb{S}^{2n-1} }{V}-\left(\frac{Ave(R)}{H_m}+\frac{1}{2}\right)^2\right)d\tilde{v}\\
\le& \frac{(n+1)\cdot(n+3)!H_m^2}{4(\pi)^{n}(n-1)}\cdot\int_M\left(\frac{1}{4}-\left(\frac{Ave(R)}{H_m}+\frac{1}{2}\right)^2\right)d\tilde{v}\\
=&\frac{(n+1)\cdot(n+3)!H_m^2}{4(\pi)^{n}(n-1)}\cdot\int_M\left(1-\frac{|Ave(R)|}{H_m}\right)\left(\frac{|Ave(R)|}{H_m}\right)d\tilde{v}\\
\le& \frac{|c_1^{n}(M)|(n+1)^2(n+2)(n+3)H_m^2}{16(n-1)}.
\end{split}
\end{equation}
In particular, $H_m\le 1$ since $Ric_{i\bar{j}}=-g_{i\bar{j}}$. So the special case follows obviously.

The equality of the last inequality in the proof can hold if and only if $$Ave(R)=-\frac{1}{2}H_m.$$ The equality of the first inequality holds if and only if for almost every $x\in M$, $R_{\xi\bar{\xi}\xi\bar{\xi}}=0$ or $-H_m$ almost everywhere.
Suppose \[S_0=\{\xi\in
\mathbb{S}^{2n-1}\subseteq T_x(M)~|~R_{\xi\bar{\xi}\xi\bar{\xi}}=0\},\]
\[S_1=\{\xi\in
\mathbb{S}^{2n-1}\subseteq T_x(M)~|~R_{\xi\bar{\xi}\xi\bar{\xi}}=-H_m\}\] and
\[S_2=\{\xi\in
\mathbb{S}^{2n-1}\subseteq T_x(M)~|~-H_m<R_{\xi\bar{\xi}\xi\bar{\xi}}<0\}.\]
Denote vol$(S_i)$ to be the volume of $S_i$, where $i=1,2,3$.
Since there is no compact K\"{a}hler-Einstein manifold $M$ such that at almost all the point $x\in M$,  vol$(S_0)=vol(S_1)=\frac{1}{2}V$ and vol$(S_2)=0$, the inequality in (\ref{rev}) is not sharp.

If the holomorphic sectional curvature is nonnegative, the arguments are similar.
\end{proof}


Similarly, we can proof Theorem \ref{ampleK}.
\begin{proof}

For $n\ge 2$, by Theorem \ref{formula}
\begin{equation}
\begin{split}
& -n|c_1^{n}(M)|+2(n+1)c_2(M)|c_1^{n-2}(M)|\\
=&\frac{(n+1)\cdot(n+3)!}{4(\pi)^{n}(n-1)\cdot V}\cdot\int_M\left(\int (R_{\xi\bar{\xi}\xi\bar{\xi}}-Ave(R))^2 d\mathbb{S}^{2n-1}\right)d\tilde{v}\\
=&\frac{(n+1)\cdot(n+3)!H_m^2}{4(\pi)^{n}(n-1)}\cdot\int_M\left(\frac{\int \left(\frac{R_{\xi\bar{\xi}\xi\bar{\xi}}}{H_m}\right)^2 d\mathbb{S}^{2n-1}}{V}-\left(\frac{Ave(R)}{H_m}\right)^2\right)d\tilde{v}\\
\le& \frac{(n+1)\cdot(n+3)!H_m^2}{4(\pi)^{n}(n-1)}\cdot\int_M\left(1-\left(\frac{Ave(R)}{H_m}\right)^2\right)d\tilde{v}\\
\le &\frac{(n+1)\cdot(n+3)!H_m^2}{4(\pi)^{n}(n-1)}\cdot V(M)\\
=&\frac{(n+1)^2(n+2)(n+3)H_m^2}{4(n-1)}|c_1^{n}(M)|
\end{split}
\end{equation}
So
\begin{equation}\label{nge2}
2(n+1)c_2(M)|c_1^{n-2}(M)| \le \left(\frac{(n+1)^2(n+2)(n+3)H_m^2}{4(n-1)}+n\right)|c_1^n(M)|.
\end{equation}
The equality of inequality (\ref{nge2}) holds if and only if for almost every point $x\in M$, $R_{\xi\bar{\xi}\xi\bar{\xi}}=\pm H_m$ for almost all $\xi$ and $Ave(R)=0$, which is impossible. So the above inequality is not sharp.
\end{proof}


\section{\textbf{pinched theorem}}
Let $\delta$ be the pinching constant. Then $0<\delta\le 1$.

As a corollary of Theorem \ref{formula}, we can get Theorem \ref{pinch}.
%
\begin{proof}
\begin{enumerate}
\item [1)] If $M$ is with positive holomorphic sectional curvature and $Ric_{i\bar{j}}=\mu g_{i\bar{j}}$ ($\mu>0$), then
\begin{equation}
\begin{split}
0\le& -nc_1^{n}(M)+2(n+1)c_2(M)c_1^{n-2}(M)\\
=&\frac{(n+1)\cdot(n+3)!\mu^{n-2}}{4(\pi)^{n}(n-1)\cdot V}\cdot\int_M\left(\int (R_{\xi\bar{\xi}\xi\bar{\xi}}-Ave(R))^2 d\mathbb{S}^{2n-1}\right)d\tilde{v}\\
\le&\frac{(n+1)\cdot(n+3)!\mu^{n-2}H_m^2}{4(\pi)^{n}(n-1)}\cdot\int_M\int (1-\delta)^2d\tilde{v}\\
=&\frac{(n+1)\cdot(n+3)!\mu^{n-2}H_m^2}{4(\pi)^{n}(n-1)}\cdot (1-\delta)^2\cdot V(M)\\
=&\frac{c_1^n(M)(n+1)^{2}(n+2)(n+3)H_m^2}{4(n-1)\mu^2}\cdot (1-\delta)^2\\
\le & \frac{c_1^n(M)(n+1)^{2}(n+2)(n+3)}{4(n-1)}\cdot (1-\delta)^2.
\end{split}
\end{equation}
If
\begin{equation}
\delta>1-(\frac{4(n-1)}{c_1^n(M)(n+1)^{2}(n+2)(n+3)})^{1/2},
\end{equation}
then \[-nc_1(M)^{n}+2(n+1)c_2(M)c_1(M)^{n-2}<1.\]
Therefore \[-nc_1(M)^{n}+2(n+1)c_2(M)c_1(M)^{n-2}=0\] and $M$ is with constant holomorphic sectional curvature (\cite{Ko-No} Theorem 7.5). So $M$ is holomorphically isometric to $\mathbb{P}^n$.
\item [2)] If $M$ is with negative holomorphic sectional curvature and $Ric_{i\bar{j}}=-\mu g_{i\bar{j}}$ ($\mu>0$), then similarly,
\begin{equation}
\begin{split}
0\le& (-1)^n(-nc_1(M)^{n}+2(n+1)c_2(M)c_1(M)^{n-2})\\
=&\frac{(-1)^n(n+1)\cdot(n+3)!\mu^{n-2}}{4(\pi)^{n}(n-1)\cdot V}\cdot\int_M\left(\int (R_{\xi\bar{\xi}\xi\bar{\xi}}-Ave(R))^2 d\mathbb{S}^{2n-1}\right)d\tilde{v}\\
\le & \frac{|c_1^n(M)|(n+1)^{2}(n+2)(n+3)}{4(n-1)}\cdot (1-\delta)^2.
\end{split}
\end{equation}
If
\begin{equation}
\delta>1-(\frac{4(n-1)}{|c_1^n(M)|(n+1)^{2}(n+2)(n+3)})^{1/2},
\end{equation}
then \[-nc_1(M)^{n}+2(n+1)c_2(M)c_1(M)^{n-2}=0\] and $M$ is with constant holomorphic sectional curvature  (\cite{Ko-No} Theorem 7.5). So $M$ a is holomorphically isometric to ball quotient.
\item [3)]If $n\ge 2$ and $\mu\neq 0$, then
\begin{equation}
\begin{split}
0\le &|-nc_1(M)^{n}+2(n+1)c_2(M)c_1(M)^{n-2}|\\
=&\frac{(n+1)\cdot(n+3)!|\mu|^{n-2}}{4(\pi)^{n}(n-1)\cdot V}\cdot\int_M\left(\int (R_{\xi\bar{\xi}\xi\bar{\xi}}-Ave(R))^2 d\mathbb{S}^{2n-1}\right)d\tilde{v}\\
\le &\frac{(n+1)\cdot(n+3)!|\mu|^{n-2}}{4(\pi)^{n}(n-1)\cdot V}\cdot\int_M\left(\int a^2 d\mathbb{S}^{2n-1}\right)d\tilde{v} \\
=&\frac{a^2(n+1)\cdot(n+3)!|\mu|^{n-2}}{4(\pi)^{n}(n-1)}V(M)\\
=&\frac{a^2(n+1)^2(n+2)(n+3)}{4(n-1)\mu^2}|c_1(M)^{n}|<1.
\end{split}
\end{equation}
%

So $R_{\xi\bar{\xi}\xi\bar{\xi}}=Ave(R)$. Then $M$ is with constant holomorphic sectional curvature $0$ (\cite{Ko-No} Theorem 7.5). Then $M$ is holomorphically isometric to a complex torus.
\end{enumerate}
\end{proof}

\begin{remark}
If $M$ is (negative) $\delta$-holomorphically pinched, then it is (negative) $\frac{1}{4}(3\delta-2)$-pinched. If $M$ is (negative) $\delta$-pinched, then it is (negative) $\frac{\delta(8\delta+1)}{1-\delta}$-holomorphically pinched (see \cite{Ber1} Proposition 1 and \cite{Bi-Go} Section 8). If $M$ is a K\"{a}hler-Einstein manifold, then we improve the well known $\frac{1}{4}$-pinched theorem (see \cite{Ko-No} page 369) and negative $\frac{1}{4}$-pinched theorem by Yau and Zheng (\cite{Ya-Zh}).
\end{remark}

Similarly, we can prove Theorem \ref{small}.
\begin{proof}
\begin{enumerate}
\item [1)] If $M$ is with positive holomorphic sectional curvature, then using the same arguments in the proof of Theorem \ref{rY}, we have
\begin{equation}
\begin{split}
0\le& -nc_1^{n}(M)+2(n+1)c_2(M)c_1^{n-2}(M)\\
=&\frac{(n+1)\cdot(n+3)!\mu^{n-2}}{4(\pi)^{n}(n-1)\cdot V}\cdot\int_M\left(\int (R_{\xi\bar{\xi}\xi\bar{\xi}}-Ave(R))^2 d\mathbb{S}^{2n-1}\right)d\tilde{v}\\
<& \frac{c_1^{n}(M)(n+1)^2(n+2)(n+3)H_m^2}{16(n-1)}\le 1.
\end{split}
\end{equation}

Therefore \[-nc_1(M)^{n}+2(n+1)c_2(M)c_1(M)^{n-2}=0\] and $M$ is holomorphically isometric to $\mathbb{P}^n$.
\item [2)] If $M$ is with negative holomorphic sectional curvature, then similarly,
\begin{equation}
\begin{split}
0\le& (-1)^n(-nc_1(M)^{n}+2(n+1)c_2(M)c_1(M)^{n-2})\\
=&\frac{(-1)^n(n+1)\cdot(n+3)!\mu^{n-2}}{4(\pi)^{n}(n-1)\cdot V}\cdot\int_M\left(\int (R_{\xi\bar{\xi}\xi\bar{\xi}}-Ave(R))^2 d\mathbb{S}^{2n-1}\right)d\tilde{v}\\
<& \frac{|c_1^{n}(M)|(n+1)^2(n+2)(n+3)H_m^2}{16(n-1)}\le 1.
\end{split}
\end{equation}
So \[-nc_1(M)^{n}+2(n+1)c_2(M)c_1(M)^{n-2}=0\] and $M$ is holomorphically isometric to a ball quotient.
\end{enumerate}
\end{proof}

\section{\textbf{Negativity of the holomorphic sectional curvature and positivity of the canonical bundle}}
In 2016, Wu-Yau (\cite{Wu-Ya}) showed that if $M$ is a smooth projective manifold with negative holomorphic sectional curvature everywhere, then $M$ has a K\"{a}hler metric whose Ricci curvature is everywhere negative. In particular, the canonical bundle $K_M$ is ample. Later, Tosatti-Yang (\cite{To-Ya}) generalized Wu-Yau's result to compact K\"{a}hler manifolds. Conversely, however, the result doesn't hold. In fact, Demailly (\cite{Dem}) constructed a smooth projective surface which is Kobayashi hyperbolic (with ample canonical bundle), but the projective surface cannot admit any hermitian metric with negative holomorphic sectional curvature. In general, a Fermat hypersurface of degree $N+2$ in the projective space of dimension $N$ contains many rational curves and has ample canonical line bundle. However, it is well known that a Hermitian manifold with negative holomorphic sectional curvature doesn't admit any rational curves (cf. \cite{Shi} or Lemma 3.1 in \cite{H-L-W} for more general argument). Usually, for a higher dimensional projective manifold with non-nef anti-canonical divisor, it is so easy to determine if there are rational curves on them or not.

We use covering theory to construct a projective manifold of dimension $n$ $(n\ge 2)$ with ample canonical bundle which doesn't carry any hermitian metric with negative holomorphic sectional curvature by Theorem \ref{rY}.  So we need Izawa's result for calculating the Chern numbers of the covering spaces (\cite{Iza}).

Let $f: Y\rightarrow X$ be a ramified covering between $n$-dimensional compact complex manifolds with covering multiplicity $u$. Let $R_f=\sum_i r_iR_i$ be the ramification divisor of $f$, and $B_f=\sum_i b_iB_i$ the branch locus of $f$. We set $f^*B_i=\sum_t r_{i_t}R_{i_t}$ where the induced map $f|_{R_{i_t}}: R_{i_t}\rightarrow B_i$ with mapping degree $n_{i_t}$. So $b_i=\sum_t n_{i_t}r_{i_t}$. We assume that the ramification divisor and the irreducible component of the branch locus are all nonsingular. Then

\begin{equation}\label{coverChern}
\begin{split}
 &c_1^{N_1}\cdots c_n^{N_n}(Y)-u c_1^{N_1}\cdots c_n^{N_n}(X)\\
=&\sum_i\sum_{\alpha=0}^{n-1}(\sum_t\frac{n_{i_t}(1-(r_{i_t}+1)^{\alpha+1})}{(r_{i_t}+1)^\alpha})P_{\alpha}(c_1(B_i), \cdots, c_{n-1}(B_i))\cdot c_1(L_{B_i})^\alpha\cap[B_i],
\end{split}
\end{equation}
where $L_{B_i}$ is the line bundle corresponding to the divisor $B_i$, $P_\alpha$ is the coefficient of $l^\alpha$ in
\[H_{\xi}^{(N_1\cdots N_n)}(l):=l^{-1}\cdot((\prod_{i=1}^n(c_i(\xi)+c_{i-1}(\xi)\cdot l)^{N_i})-c_1^{N_1}\cdot c_n^{N_n}(\xi))\] as a polynomial in $l$, i.e. \[H_{\xi}^{(N_1\cdots N_n)}(l)=\sum_{\alpha=0}^{n-1} P_{\alpha}(c_1,\cdots, c_{n-1})l^\alpha.\]

\begin{example}\label{example dimn}
Let $f: M\rightarrow \mathbb{P}^n$ be the double covering whose branch locus $B$ is a smooth hypersurface of degree $d=2n+4$ ($n\ge 2$). We will calculate $c_1^n(M)$ and $c_1^{n-2}c_2(M)$. Then we  need to show that they don't satisfy the inequality in Theorem \ref{rY}.

Let $H$ be a hyperplane in $\mathbb{P}^n$. Then $K_M=f^*(K_{\mathbb{P}^n}+(n+2)H)=f^*(H)$. So for any curve $C$ in $M$, $K_M\cdot C>0$. Moreover, $K_M^n=(f^*(H))^n=2$. Therefore $K_M$ is ample and $c_1^n(M)=(-1)^n2$.
We can also calculate the Chern classes of the tangent bundle to the smooth hypersurface $B$ of degree $d$. In fact, we have the standard tangent bundle sequence
\[0\rightarrow T_B\rightarrow T_{\mathbb{P}^n}|_B\rightarrow N_{B/\mathbb{P}^n}\rightarrow 0,\]
where the normal bundle $N_{B/\mathbb{P}^n}=\mathcal{O}_B(d)$.
Denote $\zeta_B:=H|_{B}$. Using Whitney's formula, we have
\begin{equation*}
\begin{split}
c(B)=&c(T_B)=\frac{c(T_{\mathbb{P}^n}|_B)}{N_{B/\mathbb{P}^n}}=\frac{(1+\zeta_B)^{n+1}}{1+d\zeta_B}\\
=&(1+(n+1)\zeta_B+\frac{n(n+1)}{2}\zeta_B^2+\cdots)(1-d\zeta_B+d^2\zeta_B^2+\cdots).
\end{split}
\end{equation*}
So
\begin{equation*}
\begin{split}
c_1(B)=&(n+1-d)\zeta_B,\\
c_2(B)=&(d^2-(n+1)d+\frac{n(n+1)}{2})\zeta_B^2.
\end{split}
\end{equation*}
By (\ref{coverChern}), we have
\begin{equation*}
\begin{split}
 &c_1^{n-2}c_2(M)-2c_1^{n-2}c_2(\mathbb{P}^n)\\
=&c_1^{n-2}c_2(M)-n(n+1)^{n-1}\\
=&(\frac{dH}{2})^{-1}((c_1(B)+\frac{dH}{2})^{n-2}(c_2(B)+c_1(B)\cdot \frac{dH}{2})-c_1^{n-2}c_2(B))\cap [B]\\
&-2(dH)^{-1}((c_1(B)+dH)^{n-2}(c_2(B)+c_1(B)\cdot dH)-c_1^{n-2}c_2(B))\cap [B]\\
=&(\frac{d}{2})^{-1}(((n+1-d)+\frac{d}{2})^{n-2}(d^2-(n+1)d+\frac{n(n+1)}{2}+(n+1-d)\cdot \frac{d}{2})\\
&-(n+1-d)^{n-2}(d^2-(n+1)d+\frac{n(n+1)}{2}))d\\
&-2(d)^{-1}(((n+1-d)+d)^{n-2}(d^2-(n+1)d+\frac{n(n+1)}{2}+(n+1-d)d\\
&-(n+1-d)^{n-2}(d^2-(n+1)d+\frac{n(n+1)}{2}))d\\
=&2(((n+1-\frac{d}{2})^{n-2}(d^2-(n+1)d+\frac{n(n+1)}{2}+\frac{d(n+1-d)}{2})\\
&-(n+1)^{n-2}(d^2-(n+1)d+\frac{n(n+1)}{2}+d(n+1-d)))\\
=&(-1)^{n-2}(3n^2+11n+12)-n(n+1)^{n-1},
\end{split}
\end{equation*}
i.e. \[c_1^{n-2}c_2(M)=(-1)^{n-2}(3n^2+11n+12).\]
For $n\ge 2$,  the inequality in Theorem \ref{rY} doesn't hold.
So the examples we construct are with ample canonical bundles, but don't carry any hermitian metric with negative holomorphic sectional curvatures.
\end{example}
\section*{Acknowledgements}
The author would like to thank professor Ngaiming Mok for discussing about
this problem and supporting his research when he was in the
University of Hong Kong several years ago, and thank professor Gang Liu for useful discussions and notifying him the interesting paper by Siu and Yang (\cite{Si-Ya}). The author also would like to thank the anonymous referee for many useful suggestions to simplify his formulas in the original version.

\nocite{*}
\bibliographystyle{amsalpha}
\bibliography{Chern-reference}

\end{document}